\newtheorem{theorem}{Theorem}[section]
 \newtheorem{lemma}[theorem]{Lemma}
 \newtheorem{example}[theorem]{Example}
\def\CC{{\mathbb C}}
\def\MatlabPackage{{\sf MultiParEig}}
\title{Subspace methods for 3-parameter eigenvalue problems}
\author{
Michiel~E.~Hochstenbach\thanks{%
Department of Mathematics and Computer Science, TU Eindhoven, PO Box 513, 5600 MB, The Netherlands,
{\tt www.win.tue.nl/$\sim$hochsten}. This author has been supported by an NWO Vidi research grant.}
\and
Karl Meerbergen\thanks{%
Department of Computer Science, KU Leuven, Celestijnenlaan 200A, 3001 Leuven, Belgium,
{\tt karl.meerbergen@cs.kuleuven.be}.}
\and
Emre Mengi\thanks{%
Department of Mathematics, Ko\c{c} University, Rumelifeneri Yolu, 34450 Sar{\i}yer-\.{I}stanbul, Turkey,
{\tt emengi@ku.edu.tr}. The research of this author was supported in part by the TUBITAK
(Scientific and Technological Research Council of Turkey) grant 115F585.}
\and
Bor~Plestenjak\thanks{%
IMFM and Faculty of Mathematics and Physics, University of Ljubljana, Jadranska 19,
SI-1000 Ljubljana, Slovenia, {\tt bor.plestenjak@fmf.uni-lj.si}. This author
was supported in part by the Slovenian Research Agency (grant P1-0294 and bilateral project ARRS-BI-TR/16-18-004 between Slovenia and Turkey).}
}
\begin{document}
\maketitle

\begin{abstract}
We propose subspace methods for 3-parameter eigenvalue problems.
Such problems arise when separation of variables is applied to separable boundary value problems;
a particular example is the Helmholtz equation in ellipsoidal and paraboloidal coordinates.
While several subspace methods for 2-parameter eigenvalue problems exist,
their extensions to three parameter setting seem to be challenging. An inherent difficulty is that, while
for 2-parameter eigenvalue problems we can exploit a relation to Sylvester
equations to obtain a fast Arnoldi type method, such a relation does not seem to exist when there are
three or more parameters. Instead, we introduce a subspace iteration method with projections
onto generalized Krylov subspaces that are constructed from scratch at every iteration using certain
Ritz vectors as the initial vectors. Another possibility is a Jacobi--Davidson type
method for three or more parameters, which we generalize from its 2-parameter counterpart.
For both approaches, we introduce a selection criterion for deflation that is based on the angles
between left and right eigenvectors. The Jacobi--Davidson approach is devised to
locate eigenvalues close to a prescribed target, yet it often also performs well when eigenvalues
are sought based on the proximity of one of the components to a prescribed target.
The subspace iteration method is devised specifically for the latter task. 
The proposed approaches are suitable especially for problems where the computation of several eigenvalues 
is required with high accuracy. Matlab implementations of both methods
have been made available in the package \texttt{MultiParEig} \cite{BorMC1}. \\[10pt]

\noindent
\textbf{Key words.}
Multiparameter eigenvalue problem, ellipsoidal wave equation, Baer wave equation,
Arnoldi method, Jacobi--Davidson method, tensor
\\[5pt]
\noindent
\textbf{AMS subject classifications.} 65F15, 15A24, 15A69
\end{abstract}

\section{Introduction}
We consider an algebraic multiparameter eigenvalue problem of 
the form
\begin{equation} \label{eq::TEPproblemK}
\begin{array}{rcl}
A_{10} \, x_1&=&\lambda_1 \, A_{11} \, x_1+\cdots +\lambda_k \, A_{1k} \, x_1,\\[-0.4em]
&\vdots&\\[-0.4em]
A_{k0} \, x_k&=&\lambda_1 \, A_{k1} \, x_k+\cdots +\lambda_k \, A_{kk} \, x_k,
\end{array}
\end{equation}
where $A_{ij}\in\CC^{n_i\times n_i}$ are given matrices for $i=1,\ldots,k$ and $j=0,\ldots,k$.
We are looking for nonzero vectors $x_i\in\CC^{n_i}$ and a $k$-tuple $(\lambda_1,\ldots,\lambda_k)$
that satisfy \eqref{eq::TEPproblemK}. Such a $k$-tuple $(\lambda_1,\ldots,\lambda_k)$ is called an eigenvalue
and the tensor product $x_1\otimes \cdots \otimes x_k$ is called the corresponding eigenvector. For more
details on multiparameter eigenvalue problems, we refer to \cite{Atkinson}.

One possible source for such problems is the separation of variables; when applied to certain separable boundary value
problems, see, e.g., \cite{MoonSpencer, willbook}, we obtain a system of $k$ linear ordinary differential equations of the form
\begin{equation}
p_j(x_j) \, y_j''(x_j) + q_j(x_j) \, y_j'(x_j) + r_j(x_j) \, y_j(x_j) =
\sum_{\ell=1}^k \lambda_\ell \, s_{j\ell}(x_j) \, y_j(x_j),\label{eq::BDEproblem}
 \quad j=1,\ldots,k,
\end{equation}
where $x_j\in[a_j,b_j]$,
together with appropriate boundary conditions.
We are interested
in a $k$-tuple $(\lambda_1,\ldots,\lambda_k)$ and nontrivial functions $y_1,\ldots,y_k$
such that equations \eqref{eq::BDEproblem} and the boundary conditions are satisfied.
For more details on systems of the form \eqref{eq::BDEproblem} we refer to \cite{Atkinson2};
see also Section~\ref{sec:motivation}.

By discretizing \eqref{eq::BDEproblem} we obtain a problem of the form \eqref{eq::TEPproblemK}.
This approach is used in \cite{Calin3} to find numerical solutions for several separable boundary value problems
and improve previous results from the literature.
Specifically, spectral collocation is used in \cite{Calin3} for the discretization, which gives rise to
relatively small matrices and accurate results.
While several suitable numerical methods for the case $k=2$ exist, see, e.g., \cite{Calin3} and the references therein,
available feasible numerical methods for $k\ge 3$ are limited to problems with very small
matrices, which means that even by using spectral collocation,
we cannot obtain many accurate eigenvalues of \eqref{eq::BDEproblem}.
We introduce new variants of numerical methods for 3-parameter eigenvalue problems that exceed the above
limitations and can be applied to problems with larger matrices.
This allows us to solve efficiently and accurately several 3-parameter eigenvalue problems of the
form \eqref{eq::BDEproblem}, which we demonstrate in numerical examples.

Let $S_k$ denote the set of permutations of the set $\{1, \dots, k\}$, and let
${\rm sgn}(\sigma)$ be the sign of a permutation $\sigma\in S_k$.
By introducing the $k\times k$ operator determinants
\begin{equation}\label{eq:defn_Delta}
 \Delta_0   \; := \;
 \left|
 \begin{matrix}
 	A_{11} & \cdots & A_{1k}\cr
 	\vdots &  		& \vdots \cr
 	A_{k1} &  \cdots & A_{kk}
 \end{matrix}
 \right|_\otimes = \sum_{\sigma\in S_k}{\rm sgn}(\sigma) \
 A_{1\sigma_1}\otimes A_{2\sigma_2}\otimes \cdots \otimes A_{k\sigma_k},
\end{equation}
where $\otimes$ denotes the Kronecker product, and, similarly,
\begin{equation}\label{eq:defn_Deltab}
 \Delta_i  \; := \; \left|
 	\begin{matrix}
		A_{11} & \cdots & A_{1,i-1} & A_{10} & A_{1,i+1} & \cdots & A_{1k}\cr
			 \vdots & & \vdots &\vdots & \vdots & & \vdots \cr
 		A_{k1} & \cdots & A_{k,i-1} & A_{k0} & A_{k,i+1} & \cdots & A_{kk}
	\end{matrix}
 	\right|_\otimes
\end{equation}
for $i=1,\ldots,k$,
we obtain matrices $\Delta_0,\ldots,\Delta_k$ of size
$(n_1\cdots n_k)\times (n_1\cdots n_k)$.
If $\Delta_0$ is nonsingular, then the matrices $\Delta_0^{-1}\Delta_1,\ldots,
\Delta_0^{-1}\Delta_k$ commute, and \eqref{eq::TEPproblemK} is equivalent
to a system of generalized eigenvalue problems
\[
	\Delta_j \, z  =  \lambda_j \, \Delta_0 \, z, \quad\quad  j =1,\dots, k
\]
for $z=x_1\otimes \cdots \otimes x_k$ (for details, see, e.g., \cite{Atkinson}).
This relation enables one to use standard numerical methods for generalized eigenvalue
problems if the $\Delta$-matrices are not too large. However, when spectral methods
are used to discretize \eqref{eq::BDEproblem}, then in practice, even for $k=2$, the
$\Delta$-matrices might be so large that it is not efficient, or even not feasible,
to compute all of the eigenvalues. Fortunately, for various applications, the retrieval of
several eigenvalues closest to a prescribed target
is sufficient. In some other cases, eigenvalues $(\lambda_1, \dots, \lambda_k)$
such that a prescribed component among $\lambda_1,\dots, \lambda_k$ is close to a given target $\sigma$
are of interest. For instance, when we apply separation
of variables to the $k$-dimensional Helmholtz equation $\nabla^2 u+\omega^2 u=0$, usually
only one of the parameters $\lambda_1,\ldots,\lambda_k$ is related to the eigenfrequency $\omega$
(see Section~\ref{sec:motivation}
for more details). If we assume without loss of generality
that $\lambda_k$
is relevant to the problem and we are interested in first low-frequency modes for the Helmholtz equation, then
we are looking for eigenvalues with the smallest value of $|\lambda_k|$.

\subsection{Overview} 
Jacobi--Davidson type methods have been proposed for the 2-parameter eigenvalue problem in \cite{HKP, HPl02}
to compute a few eigenvalues closest to a prescribed target.
When eigenvalues $(\lambda_1, \dots, \lambda_k)$ with smallest $|\lambda_k|$ are sought,
subspace iteration or an Arnoldi iteration operating directly on $\Delta_k z = \lambda_k \Delta_0 z$
appears more appropriate. Such ideas have been explored well in the 2-parameter eigenvalue setting,
and applied for the solution of various separable boundary value problems \cite{KarlBor, Calin3}.
This success is mostly due to the fact that linear systems of the form $\Delta_2 w = \Delta_0 v$ for a given $v$
can be expressed as Sylvester equations involving the matricizations of the vectors $v$ and $w$,
and thus can be solved efficiently at a cost of $O(n_1^3 + n_2^3)$. An underlying difficulty is that such a Sylvester
equation representation is not known for the linear system $\Delta_k w = \Delta_0 v$
when $k \geq 3$.

The main contributions of this work are a Jacobi--Davidson method in Section~\ref{subsec:JD},
and an inexact subspace iteration method with Ritz projections in Section~\ref{sec:3SubIter} for 3-parameter
eigenvalue problems. The Jacobi--Davidson method is inspired by earlier works \cite{HKP, HPl02},
but new ingredients are also put in use. For instance,
a Newton-method based tensor Rayleigh quotient iteration is incorporated to speed up convergence.
Numerical experiments indicate that the proposed Jacobi--Davidson method is effective in extracting both the eigenvalues
closest to a prescribed target, and the eigenvalues $(\lambda_1,\lambda_2,\lambda_3)$ whose
$\lambda_3$ components are closest to a prescribed target. On the other hand,
inexact subspace iteration, which operates directly on the generalized eigenvalue problem
$\Delta_3 z = \lambda_3 \Delta_0 z$, is tailored to compute eigenvalues with their
$\lambda_3$ components closest to a prescribed target. Instead of solving
 a linear
system of the form $\Delta_3 w = \Delta_0 v$ for the unknown $w$, it projects the
full problem onto certain generalized Krylov subspaces that are restarted
at every iteration with selected Ritz vectors. We especially aim at problems where the computation of
several extreme eigenvalues is required with high accuracy. Both of the proposed Jacobi--Davidson method
and inexact subspace iteration are well-suited to deal with such problems.

\subsection{Outline}
We start with 
two particular applications giving rise to 3-parameter eigenvalue problems
in Section~\ref{sec:motivation}; this is followed by a brief review of subspace iteration approaches for the 2-parameter case
in Section~\ref{sec:2parameter}. In particular, efficient solutions of the linear system
$\Delta_2 w = \Delta_0 v$ with or without projections via their Sylvester equation characterization facilitate
these approaches.

The main body 
is Section~\ref{sec:3par}, which
introduces iterative methods for the
extraction of a few targeted eigenvalues of a 3-parameter eigenvalue problem. A Jacobi--Davidson
method is proposed in Section~\ref{subsec:JD}. The difficulty intrinsic to applying a Krylov subspace
method directly to $\Delta_3 z = \lambda_3 \Delta_0 z$ is pointed out in Section~\ref{subs:fullKrylov}.
Consequently, in Section~\ref{sec:3SubIter_v1},
a subspace iteration method that does not work on the full linear systems, but rather solves their projections
onto Krylov subspaces, is described. The downside of this
approach is that in
every iteration it requires low-rank third-order tensor approximations
for the solutions of the linear systems. Finally, an efficient Krylov subspace based subspace iteration is proposed
in Section~\ref{sec:3SubIter}, which
employs the projection ideas in Section~\ref{sec:3SubIter_v1},
but
removes the need for low-rank tensor approximations.

Section~\ref{sec:NumResults} is devoted to extensive numerical experiments. In particular, we illustrate how the
proposed Jacobi--Davidson and subspace iteration methods perform on the 3-parameter eigenvalue problems
resulting from the applications in Section~\ref{sec:motivation}, as well as on a random synthetic example.

\section{Motivation}\label{sec:motivation}
We give two applications
that lead to 3-parameter eigenvalue problems of the form \eqref{eq::BDEproblem}.
They concern the separation of variables applied to the Helmholtz equation
\begin{equation}\label{eq:helm}
\nabla^2 u + \omega^2 u = 0
\end{equation}
in ellipsoidal and paraboloidal coordinates.

\subsection{Ellipsoidal wave equations}\label{sec:motive_ellip_wave}
If we aim to compute eigenfrequencies of an ellipsoidal body
with a fixed boundary, then we have to solve the Helmholtz equation \eqref{eq:helm}
over the ellipsoid
\[
\Omega := \{ \: (x,y,z) \in {\mathbb R}^3 \;\; | \;\; (x/x_0)^2 + (y/y_0)^2 + (z/z_0)^2 \: \leq \: 1 \}
\]
subject to the Dirichlet boundary condition
$
 u|_{\partial \Omega} = 0.
$
Here, $x_0, y_0, z_0$ correspond to the radii of the semi-axes of the ellipsoid and satisfy $z_0 > y_0 > x_0 > 0$.
A numerical approach has been proposed in \cite{Willatzen2005},
see also \cite{Levitina, Calin3}; here we give an outline of how it leads to a 3-parameter eigenvalue problem.

The Helmholtz equation is separable in ellipsoidal coordinates $(\xi_1, \xi_2, \xi_3)$ \cite{MoonSpencer},
a natural choice for the region $\Omega$. Formally,
there exist functions $X_1(\xi_1), X_2(\xi_2)$, $X_3(\xi_3)$ such that the solution can be written as
\[
 u(x(\xi_1, \xi_2, \xi_3), \, y(\xi_1, \xi_2, \xi_3), \, z(\xi_1, \xi_2, \xi_3))
 \; = \; X_1(\xi_1) \, X_2(\xi_2) \, X_3(\xi_3).
\]
Exploiting the separability property above and expressing the Helmholtz equation in ellipsoidal coordinates, we obtain three
ordinary differential equations
\[
 t_j\, (t_j - 1) (t_j - c) \, \widetilde{X}_j'' +
 \tfrac{1}{2} (3t_j^2 - 2(1+c)t_j + c) \, \widetilde{X}_j' +
 (\lambda + \mu t_j + \eta t_j^2) \, \widetilde{X}_j = 0,
 \quad j = 1,2,3,
\]
where $c = a^2 / b^2$, $a = (z_0^2 - x_0^2)^2$, $b = (z_0^2 - y_0^2)^2$, $t_j = \xi_j^2 / b^2$,
$\widetilde{X}_j(t_j) := X_j (\xi_j(t_j))$,
and the elliptical coordinates satisfy $z_0 > \xi_1 > a > \xi_2 > b > \xi_3 > 0$.
The three
differential equations are coupled by the scalars $\lambda, \mu$, $\eta$, but
only $\eta = \omega^2 b^2/4$ is related to the eigenfrequency $\omega$.
The function $\widetilde{X}_j(t_j)$ above is of the form
\[
 \widetilde{X}_j(t_j) \: = \: t_j^{\rho/2} \, (t_j-1)^{\sigma/2} \, (t_j-c)^{\tau/2} \, F_j(t_j)
\]
where $F_j(t_j)$ is an integral function of $t_j$, and $\rho, \sigma, \tau$ can take values $0$ or $1$.
For each one of the eight possible configurations for $(\rho,\sigma,\tau)$,
we deduce the system of ordinary differential equations
\begin{equation}\label{eq:main_ODE_sys}
 t_j(t_j - 1) (t_j - c)\,F_j'' + \tfrac{1}{2} (k_2 t_j^2 - 2k_1 t_j + k_0)\,F_j'
 + (\lambda - \lambda_0 + (\mu + \mu_0) t_j + \eta t_j^2)\,F_j = 0,
 \quad j = 1,2,3,
\end{equation}
with
\begin{equation*}
 \begin{split}
 \lambda_0 = \tfrac{1}{4} \left[ (\rho + \tau)^2 + (\rho + \sigma)^2 c \right], \quad \mu_0 =
 \tfrac{1}{4} (\rho + \sigma + \tau)(\rho + \sigma + \tau + 1), \quad \; \\
 k_0 = (2 \rho + 1)c, \quad k_1 = (1+\rho)(1+c) + \tau + \sigma c, \quad k_2 = 2(\rho + \sigma + \tau) + 3.
 \end{split}
\end{equation*}
The boundedness conditions at singular points and Dirichlet condition on the boundary of the ellipsoid
give rise to the following boundary conditions:
\begin{align*}
F_1(z_0^2/b^2)&=0,\\
(k_2c^2-k_1c+k_0) \, F_j'(c)+2 \, (\lambda-\lambda_0+(\mu+\mu_0)c+\eta c^2) \, F_j(c)&=0\quad {\rm for}\quad j=1,2,\\
(k_2-2k_1+k_0) \, F_j'(1)+2 \, (\lambda-\lambda_0+\mu+\mu_0+\eta) \, F_j(1)&=0\quad {\rm for}\quad j=2,3,\\
k_0 \, F_3'(0)+2 \, (\lambda-\lambda_0) \, F_3(0)&=0.
\end{align*}
This example will be solved numerically in Section \ref{sec:result_ellip_wave} more accurately than in \cite{Calin3} as
the new methods can deal with larger matrices coming from finer discretizations.

\subsection{Baer wave equations}\label{sec:motive_Baer_wave}

Helmholtz equation \eqref{eq:helm} is also separable in paraboloidal coordinates $(\xi_1,\xi_2,\xi_3)$, which are
related to the Cartesian coordinates by (see, e.g., \cite{Duggen, MoonSpencer})
\begin{align*}
x^2&=4(c-b)^{-1}\,(b-\xi_1)\,(b-\xi_2)\,(b-\xi_3),\\
y^2&=4(b-c)^{-1}\,(c-\xi_1)\,(c-\xi_2)\,(c-\xi_3),\\
z&=\xi_1+\xi_2+\xi_3-b-c,
\end{align*}
where $-\infty<\xi_1<c<\xi_2<b<\xi_3<\infty$ and $c<b$ are the parameters of the paraboloidal coordinate system.
A constant surface $\xi_1=\gamma$, where $\gamma<c$, represents an upward opening elliptic paraboloid
which intersects the $z$-axis at $z=\gamma$, while a constant surface $\xi_3=\beta$, where $b<\beta$, represents a
downward opening elliptic paraboloid which intersects the $z$-axis at $z=\beta$.

In Section~\ref{sec:result_Baer_wave}, we will consider the
solution of the Helmholtz equation with a fixed boundary on a domain bounded by the two elliptic paraboloids
$\gamma=0$ and $\beta=5$, as well as for the choices of $c = 1$ and $b = 3$, see
Figure \ref{fig:paraboloid}.

\begin{figure}[htb]
\begin{center}
\includegraphics[scale=0.23]{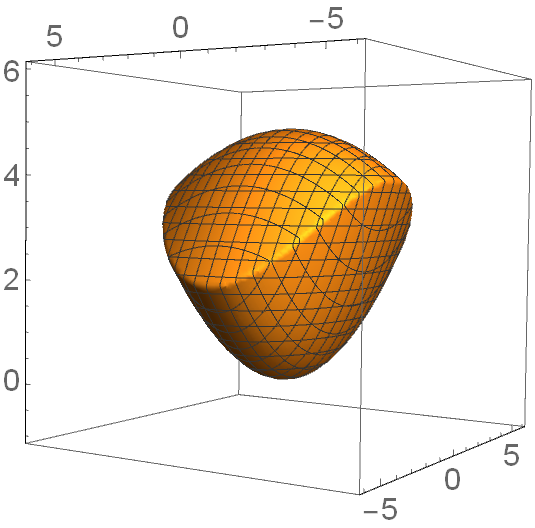}\qquad
\includegraphics[scale=0.23]{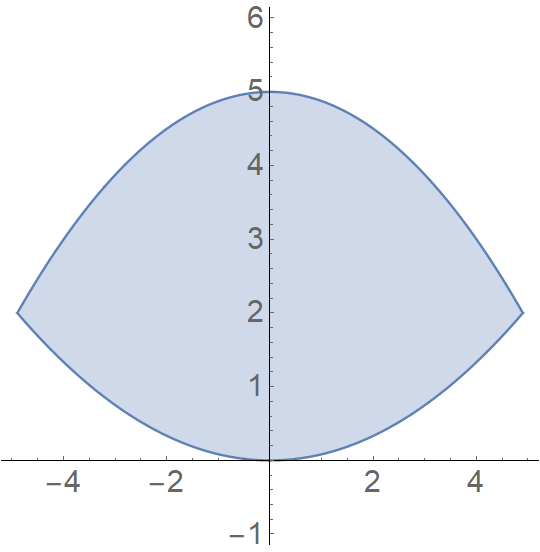}\qquad
\includegraphics[scale=0.23]{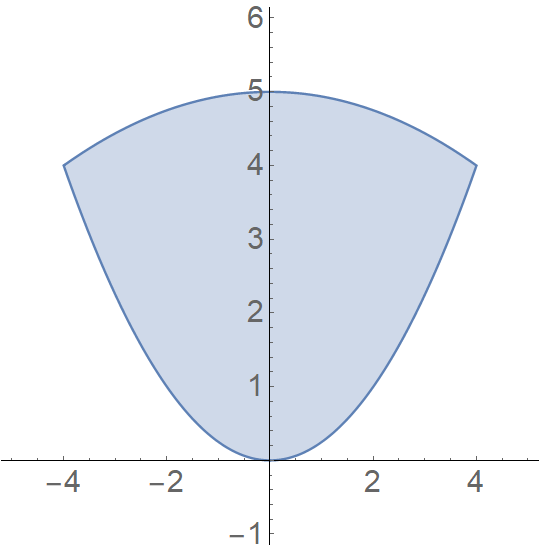}
\end{center}
\vspace{-0.7em}
\caption{Region bounded in paraboloidal coordinates by elliptical paraboloids $\xi_1=0$ and $\xi_3=5$ (left),
its intersection with $xz$-plane (middle), and intersection with $yz$-plane (right).}
\label{fig:paraboloid}
\vspace{-1em}
\end{figure}

We use separation of variables. The solution of \eqref{eq:helm} has the form
$u=X_1(\xi_1)\,X_2(\xi_2)\,X_3(\xi_3)$ \cite{MoonSpencer}, where $X_1,X_2,X_3$ satisfy the system
of Baer wave differential equations given by
\begin{equation}\label{eq:baer}
(\xi_j-b)(\xi_j-c)\,X_j''+\tfrac{1}{2}(2\xi_j-(b+c))\,X_j'+(\lambda +\mu \xi_j +\eta \xi_j^2)\,X_j = 0,
\quad\quad j = 1,2,3,
\end{equation}
and $\xi_1, \xi_2, \xi_3$ are such that $\gamma<\xi_1<c<\xi_2<b<\xi_3<\beta$.
In the equations above, $\eta=\omega^2$ is related to the eigenfrequency, whereas parameters
$\lambda$ and $\mu$ result from the separation. Equation \eqref{eq:baer} has regular singularities
at $b$ and $c$, and an irregular singularity at infinity. The exponents at the finite singularities are
$0$ and $1/2$. Therefore, it is possible to write the solution of \eqref{eq:baer} as
\begin{equation}\label{eq:baerXF}
X_i(\xi_i)=(\xi_i-b)^{\rho/2}\,(\xi_i-c)^{\sigma/2}\,F_i(\xi_i),
\end{equation}
where $F_i(\xi_i)$ is an integral function of $\xi_i$, and
$\rho,\sigma$ can be either $0$ or $1$ leading to four possible configurations.

For a particular $(\rho,\sigma)$ configuration,
by plugging \eqref{eq:baerXF} into \eqref{eq:baer}, we obtain the system
\begin{equation}\label{eq:baerF}
(\xi_j-b)(\xi_j-c)\,F_j''+\tfrac{1}{2}(k_1 \xi_j - k_0)\,F_j'+(\lambda-\lambda_0 +\mu \xi_j +\eta \xi_j^2)\,F_j=0,\quad j=1,2,3
\end{equation}
of differential equations, where
\[
k_1 = 2(1+\rho+\sigma),\quad
k_0 = (1+2\sigma)b+(1+2\rho)c,\quad
\lambda_0=-\,\tfrac{1}{4}(\rho+\sigma+2\rho\sigma).
\]
The boundedness conditions at singular points, and the Dirichlet condition on the boundary of the domain yield the following boundary conditions:
\begin{equation}\label{baer:bc}
\begin{split}
F_1(\gamma) & =0,\\
\tfrac{1}{2}(k_1 c-k_0) \, F_j'(c)+(\lambda-\lambda_0+\mu c+\eta c^2) \, F_j(c) & =0\quad {\rm for}\quad j=1,2,\\
\tfrac{1}{2}(k_1 b-k_0) \, F_j'(b)+(\lambda-\lambda_0+\mu b+\eta b^2) \, F_j(b) & =0\quad {\rm for}\quad j=2,3,\\
F_3(\beta) & =0.
\end{split}
\end{equation}
We will present some numerical experiments with these examples in Section~\ref{sec:NumResults}.

\section{Two parameters}\label{sec:2parameter}
In this section, we consider \eqref{eq::TEPproblemK} for the case $k=2$, but, to ease the notation,
set  $A_{j} := A_{j0}$, $B_{j} := A_{j1}$, $C_{j} := A_{j2}$ for $j = 1,2$, as well as
$\lambda := \lambda_1$, $\mu := \lambda_2$. A quick overview of the ideas in \cite{KarlBor} is presented next.
As we shall see in the subsequent section, most of these ideas for the two parameter case cannot be
generalized to more than two parameters.

Recall that if $\Delta_0=B_1\otimes C_2-C_1\otimes B_2$ is nonsingular,
then the two parameter problem at hand is equivalent to a coupled pair of generalized eigenvalue problems
$\Delta_1 \, z=\lambda \, \Delta_0 \, z$ and $\Delta_2 \, z=\mu \, \Delta_0 \, z$ for $z=x_1\otimes x_2$,
where $\Delta_1=A_1\otimes C_2-C_1\otimes A_2$ and $\Delta_2=B_1\otimes A_2-A_1\otimes B_2$.
Suppose that we are looking for the eigenvalues $(\lambda,\mu)$ with the smallest
value of $|\mu|$, and let us assume that $n_1n_2$ is so large that we cannot efficiently compute all eigenvalues
of the generalized eigenvalue problem
\begin{equation}\label{eq:D2D0}
 	\Delta_2 \, z=\mu \, \Delta_0 z.
\end{equation}
Next, we discuss two alternative numerical approaches for this setting: Krylov subspace methods and a subspace
iteration.

\subsection{Krylov subspace methods}
If $n_1n_2$ is not too large, then we can apply a Krylov subspace method to \eqref{eq:D2D0},
for instance the implicitly restarted Arnoldi \cite{SVDV} or the Krylov--Schur method \cite{Stewart}. 
As we are interested in the smallest values
of $|\mu|$, we want to build an orthogonal basis for the Krylov subspace ${\cal K}_k(\Delta_2^{-1}\Delta_0,v_0)$,
which means that in each step we have to compute a matrix-vector product with the matrix $\Delta_2^{-1}\Delta_0$.
The key observation to perform this multiplication efficiently is its connection with a Sylvester equation. Namely,
the expression $w =\Delta_2^{-1}\Delta_0 v$ can be rearranged as
\begin{equation}\label{eq:Kronecker_lin_sys}
 (B_1\otimes A_2-A_1\otimes B_2)\,w =(B_1\otimes C_2-C_1\otimes B_2)\,v.
\end{equation}
Using the vectorization operator
 \[ X :=
 [X_1 \ \cdots \ X_q]
\in {\mathbb C}^{p\times q},\ \: X_1, \dots, X_q \in {\mathbb C}^p \;\;
 \mapsto \;\;
 {\rm vec}(X)
 :=
 [X_1^T \ \cdots \ X_q^T]^T
 \in {\mathbb C}^{pq},
 \]
and the identity $(B\otimes A)\,{\rm vec}(X) \: = \: {\rm vec}(AXB^T)$, we can write the linear system
in \eqref{eq:Kronecker_lin_sys} as
\[
 A_2 W B_1^T - B_2 W A_1^T = C_2 V B_1^T - B_2 V C_1^T,
\]
where $W$ and $V$ are matrices such that ${\rm vec}(W)=w$ and ${\rm vec}(V)=v$. If we assume that $B_1$ and $B_2$
are nonsingular, then this is equivalent to the Sylvester equation
\begin{equation}\label{eq:sylvd2d0}
 B_2^{-1}A_2 W - W A_1^TB_1^{-T} \; = \; M ,
\end{equation}
where $M = B_2^{-1} C_2 V - VC_1^T B_1^{-T}$. As the Sylvester equation in \eqref{eq:sylvd2d0} can be solved in
${\cal O}(n_1^3+n_2^3)$ operations using, e.g., the Bartels--Stewart method \cite{Bartels},
this is much more efficient than forming $\Delta_2$ and $\Delta_0$ explicitly, and then solving
$\Delta_2w=\Delta_0 v$,
which typically requires ${\cal O}(n_1^3n_2^3)$ operations.

If $n_1 n_2$ is even larger, we can neither store many vectors from
${\cal K}_k(\Delta_2^{-1}\Delta_0,v_0)$ fully
nor perform exact computations with them efficiently. In the limit
(as we keep multiplying
with $\Delta_2^{-1} \Delta_0$) $v$ and $w$ in \eqref{eq:Kronecker_lin_sys} are collinear to the dominant eigenvector
of the form $z=x\otimes y$ and the corresponding matrices $V$ and $W$ have both rank one.
Hence, the right-hand side $M = B_2^{-1} C_2 V - VC_1^T B_1^{-T}$ of the Sylvester equation \eqref{eq:sylvd2d0} is nearly
of rank two at later iterations, whereas its solution $W$
has almost rank one.
In this case, it is possible to benefit from an approximate low-rank solver
for the Sylvester equation, see, e.g., \cite{KarlBor} that makes use of an approximate
Krylov subspace solver due to Hu--Reichel \cite{Hu1992}.

The main idea of the Hu--Reichel method is as follows. Suppose that
the Sylvester equation
\begin{equation}\label{eq:general_Syl}
 AX-XB=C
\end{equation}
is such that $C$ has low rank, and additionally suppose that the solution $X$
is expected to have low rank (in practice it is enough that both $C$ and $X$ are close to low-rank matrices).
If $C\approx FG^T$, where $F$ and $G$ have a few
columns, then matrices $Q_A$ and $Q_B$ whose columns form orthonormal bases for the Krylov subspaces
${\cal K}_r(A,F)$ and ${\cal K}_r(B^T,G)$ are built. An approximate solution for \eqref{eq:general_Syl} is then
given by $X=Q_AYQ_B^H$, where the matrix $Y$ is the solution of the small-scale
projected Sylvester equation
\[
 Q_A^HAQ_AY-YQ_B^HBQ_B=Q_A^HCQ_B.
\]

For further details and various other numerical approaches for large-scale Sylvester equations, we refer to the survey
paper \cite{Simoncini2016} and the references therein.

\subsection{Subspace iteration}
The subspace iteration starts with a matrix $Z_0\in\CC^{n_1n_2\times p}$,
such that $Z_0^HZ_0=I$, where $p\ll n_1n_2$.
In each step, first $p$ linear systems $\Delta_2 W_{k+1}=\Delta_0 Z_k$ are solved, then the columns
of $W_{k+1}$ are orthonormalized into $Z_{k+1}$. As $k$ goes to infinity, $W_{k+1}^H Z_k$ under mild conditions converges
to an upper triangular matrix with eigenvalues of the pencil $(\Delta_2, \Delta_0)$ on its diagonal.
As discussed in the previous subsection, the linear systems $\Delta_2 W_{k+1}=\Delta_0 Z_k$ can
be expressed as a set of $p$ Sylvester equations, each one of which generically posses a
low-rank structure when converging.
Consequently, instead of working with full vectors in the
columns of the matrices $Z_k$ and $W_k$, we rather use their low-rank approximations.
We express the columns of $Z_k$ as $z_i^{(k)}={\rm vec}(U_kD_i^{(k)}V_k^T)$ for $i = 1,\dots,p$,
where $U_k\in\CC^{n_1\times \ell}$ and $V_k\in\CC^{n_2\times \ell}$ have orthonormal columns and
$D_i^{(k)}$ is an $\ell\times \ell$ core matrix, where $\ell\ge p$.
Instead of solving $\Delta_2 w_i^{(k+1)}=\Delta_0 z_i^{(k)}$
exactly, we solve this only approximately and obtain a low-rank approximation for the matricization of
$w_i^{(k+1)}$ by means of the Hu--Reichel method. Specifically, we search for $w_i^{(k+1)}$
in the space
\begin{equation}\label{eq:abg}
{\cal K}_r(B_1^{-1} A_1,G)\otimes {\cal K}_r(B_2^{-1}A_2,F)
\end{equation}
for a modest $r$,
where
$F =
 [\begin{matrix}
 B_2^{-1}C_2U_k & U_k
 \end{matrix}]$
and
$G =
 [\begin{matrix}
 B_1^{-1}C_1V_k & V_k
 \end{matrix}]$ (i.e., setting $C \approx F G^T$ in
the Sylvester equation
\eqref{eq:general_Syl}
equal to $M$ as in \eqref{eq:sylvd2d0} yields these choices for $F$ and $G$).
For each $w_i^{(k+1)}$, $i=1,\ldots,p$, we solve a small projected Sylvester equation.
If the columns of $\widetilde V_{k}$ and $\widetilde U_{k}$ form orthonormal bases for
${\cal K}_r(B_1^{-1} A_1,G)$ and ${\cal K}_r(B_2^{-1} A_2,F)$, respectively, then
$w_i^{(k+1)}={\rm vec}(\widetilde U_{k}Y_i^{(k)}\widetilde V_{k}^T)$ for some $Y_i^{(k)}$.
For the construction of $U_{k+1}$ and $V_{k+1}$ we note that all vectors $w_1^{(k+1)}, \dots, w_p^{(k+1)}$
in the next step lie in \eqref{eq:abg}. This inspired a new method
in \cite{KarlBor} called subspace iteration with Arnoldi expansion.
The essential idea is to compute matrices $\widetilde V_k, \widetilde U_k$ whose columns form orthonormal
bases for ${\cal K}_r(B_1^{-1} A_1,G)$, ${\cal K}_r(B_2^{-1} A_2,F)$, and then to compute the Ritz values
with the smallest values of $|\tau|$
as well as the Ritz vectors from the projected small-scale 2-parameter eigenvalue problem
\begin{align*}
{\widetilde V_k}^HA_1\widetilde V_{k}\,c &=
\sigma \, \widetilde V_{k}^HB_1\widetilde V_{k}\,c + \tau\,\widetilde V_{k}^HC_1\widetilde V_{k}\,c \\
\widetilde U_k^HA_2\widetilde U_k\,d &=
\sigma \, \widetilde U_k^HB_2\widetilde U_k\,d + \tau\,\widetilde U_{k}^HC_2
\widetilde U_{k}\,d.
\end{align*}
From $\ell$ such Ritz vectors, which are all
decomposable, we form the new subspaces ${\rm span}\{U_{k+1}\}$ and ${\rm span}\{V_{k+1}\}$ for the next step.

\section{Three parameters}\label{sec:3par}
Let us now focus on 3-parameter eigenvalue problems, which are of the form \eqref{eq::TEPproblemK} for $k = 3$.
Similarly to the previous section, to ease the notation, we let
$A_j := A_{j0}$, $B_j := A_{j1}$, $C_j := A_{j2}$, $D_j := A_{j3}$
for $j = 1, 2, 3$, where $A_{jk}$ are as in \eqref{eq::TEPproblemK}, and $\lambda := \lambda_1$,
$\mu := \lambda_2$, $\eta := \lambda_3$.
In this 3-parameter eigenvalue setting, we are seeking the eigenvalues with the smallest values of $|\eta|$.
They correspond to the eigenvalues of the generalized eigenvalue problem
\begin{equation}\label{eq:3pdelta}
	\Delta_3 z=\eta \, \Delta_0 z
\end{equation}
with the smallest values of $|\eta|$, provided $\Delta_0$ is nonsingular, where $\Delta_0$ and $\Delta_3$ denote
matrices involving third tensors defined by (\ref{eq:defn_Delta}) and
(\ref{eq:defn_Deltab}). If such an eigenvalue is simple, then
the corresponding eigenvector $z$ is decomposable and can be expressed as $z=x_1\otimes x_2\otimes x_3$.

\subsection{Using full $\Delta$-matrices} The first option is to explicitly form the matrices
$\Delta_0$ and $\Delta_3$, and then use the QZ algorithm (or any other numerical method) to compute the
eigenvalues of \eqref{eq:3pdelta}. As the size of the matrices $\Delta_0$ and $\Delta_3$ is
$n_1n_2n_3\times n_1n_2n_3$, this is efficient only when $n_1 n_2 n_3$ is small. This approach becomes
prohibitively expensive even for modest values of $n_1, n_2$, $n_3$.

\subsection{Jacobi--Davidson type method}\label{subsec:JD}
Methods of Jacobi--Davidson type have been developed for 2-parameter eigenvalue problems in \cite{HKP, HPl02, HP08}.
As long as we are able to solve a small projected problem efficiently, the method can be generalized
to multi-parameter eigenvalue problems with three or more parameters. Inspired by its 2-parameter counterpart
in \cite{HPl02}, we give a brief description of a Jacobi--Davidson type
method for a 3-parameter eigenvalue problem in Algorithm~\ref{alg:jd}. In the description, $\|r_j\|$
represents the 2-norm of the residual $r_j$ and {\sf rgs} stands for repeated Gram--Schmidt orthogonalization.

\begin{algorithm}[h]
\begin{algorithmic}[1]
\STATE Choose initial matrices $U_j^{(0)}\in\CC^{n_j\times \ell}$ with orthonormal columns for $j=1,2,3$.
\FOR {$k=0, 1, \ldots$}
\STATE Compute appropriate Ritz value $(\sigma,\tau,\psi)$ and vector $U_1^{(k)}s_1 \otimes U_2^{(k)}s_2\otimes U_3^{(k)}s_3$
from the projected 3-parameter eigenvalue problem\label{lin1:extraction}
\begin{align*}
{U_1^{(k)}}^HA_1U_1^{(k)}s_1 &= \sigma\,{U_1^{(k)}}^HB_1U_1^{(k)} s_1+ \tau\,{U_1^{(k)}}^HC_1U_1^{(k)} s_1 +
 \psi\,{U_1^{(k)}}^HD_1U_1^{(k)} s_1, \\
{U_2^{(k)}}^HA_2U_2^{(k)}s_2 &= \sigma\,{U_2^{(k)}}^HB_2U_2^{(k)} s_2+ \tau\,{U_2^{(k)}}^HC_2U_2^{(k)} s_2 +
 \psi\,{U_2^{(k)}}^HD_2U_2^{(k)} s_2, \\
{U_3^{(k)}}^HA_3U_3^{(k)}s_3 &= \sigma\,{U_3^{(k)}}^HB_3U_3^{(k)} s_3+ \tau\,{U_3^{(k)}}^HC_3U_3^{(k)} s_3 +
 \psi\,{U_3^{(k)}}^HD_3U_3^{(k)} s_3.
\end{align*}\vspace{-1em}
\STATE Compute the residual $r_j=(A_j-\sigma B_j-\tau C_j-\psi D_j)\,u_j$, where $u_j=U_j^{(k)}s_j$, for $j=1,2,3$.\smallskip
\IF {$(\|r_{1}\|^2+\|r_{2}\|^2+\|r_{3}\|^2)^{1/2}\le \delta$}\label{lin1:delta}
\STATE Refine the Ritz pair by applying $t\ge 0$ steps of the TRQI and update the residuals.\label{lin1:refine}
\STATE If the refined pair satisfies the selection criterion and $(\|r_{1}\|^2+\|r_{2}\|^2+\|r_{3}\|^2)^{1/2}\le \varepsilon$,
then extract the eigenpair and compute the corresponding left eigenvector.
\label{lin1:convergence}
\ELSE
\STATE Solve for $j=1,2,3$ (approximately or exactly) the correction equation\label{lin1:correq}
\begin{equation}\label{eq:correq}
(I-u_ju_j^H)(A_j-\sigma B_j-\tau C_j-\psi D_j)\,v_j=-r_j, \qquad v_j \perp u_j.
\end{equation}
\vspace{-1em}
\STATE Expand $U_j^{(k+1)}={\sf rgs}(U_j^{(k)},v_j)$ for $j=1,2,3$.\label{lin1:exp}
\STATE If the dimension of $U_j^{(k+1)}$ is too large, construct new\label{lin1:restart}
 $U_j^{(k+1)}\in\CC^{n_j\times \ell}$ for $j=1,2,3$.
\ENDIF
\ENDFOR
\end{algorithmic}
\caption{\emph{Jacobi--Davidson method for the
3-parameter eigenvalue problem.} \\
In the algorithm, $\ell$ denotes the size of the subspace after a restart,
$\varepsilon$ is used in the convergence criterion for an eigenvalue, and $\delta > \varepsilon$ is used to decide whether
a Ritz pair is a candidate for TRQI refinement.}
\label{alg:jd}
\end{algorithm}

In Algorithm~\ref{alg:jd} we extract one eigenpair at a time. A small projected 3-parameter eigenvalue problem is solved
in each step. If an eigenpair has converged, then we keep the current subspace, as it may lead to other
eigenvalues. Otherwise, we expand the subspace with the addition of a vector that satisfies
the correction equation \eqref{eq:correq} in line~\ref{lin1:exp}, where we apply repeated Gram--Schmidt
orthogonalization. In what follows, we spell out some of the
important details of the algorithm. \smallskip

\noindent
\textit{\textbf{Targeting.}} Depending on the application, a prescribed eigenvalue target can be
either a point $(\lambda_0, \mu_0, \eta_0)$ or a plane, e.g., $\eta=0$.
For instance, if we take $(0,0,0)$ as the target, then we search 
for eigenvalues
with the minimal value of $|\lambda|^2+|\mu|^2+|\eta|^2$.
In line~\ref{lin1:extraction}, we select a particular Ritz value $(\sigma,\tau,\psi)$ that is closest to the
target and satisfies an additional selection criterion described below.\medskip

\noindent
\textit{\textbf{Selection Criterion.}}
The purpose of the selection criterion is to prevent convergence to an eigenvalue that has already been detected.
The criterion is based on the following lemma, which is a straightforward generalization of its 2-parameter
counterpart (see \cite{HKP}).\smallskip

\begin{lemma}
Let $(\lambda_1,\mu_1,\eta_1)\ne (\lambda_2,\mu_2,\eta_2)$ be different
eigenvalues of the 3-parameter eigenvalue problem
such that $(\lambda_1,\mu_1,\eta_1)$ is a simple eigenvalue with the
right eigenvector $x_1^{(1)} \otimes x_2^{(1)} \otimes x_3^{(1)}$ and the
left eigenvector $y_1^{(1)} \otimes y_2^{(1)} \otimes y_3^{(1)}$. If $y_1^{(2)} \otimes y_2^{(2)} \otimes y_3^{(2)}$
is a left eigenvector corresponding to $(\lambda_2,\mu_2,\eta_2)$, then\vspace{0.2em}
\begin{enumerate}
 \item[\bf (i)] $(y_1^{(1)} \otimes y_2^{(1)} \otimes y_3^{(1)} )^H\Delta_0(x_1^{(1)} \otimes x_2^{(1)} \otimes x_3^{(1)} ) \ne 0$,
 and\vspace{0.2em}
 \item[\bf (ii)] $(y_1^{(2)} \otimes y_2^{(2)} \otimes y_3^{(2)} )^H\Delta_0(x_1^{(1)} \otimes x_2^{(1)} \otimes x_3^{(1)} ) = 0$.
\end{enumerate}
\end{lemma}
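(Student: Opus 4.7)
The approach is to reduce the multiparameter biorthogonality claim to the standard biorthogonality relations for a single generalized eigenvalue problem, via the equivalent formulation of \eqref{eq:mpep} as the coupled family of generalized eigenvalue problems involving $\Delta_0, \Delta_1, \Delta_2, \Delta_3$. Writing $\alpha_i := (\lambda_i, \mu_i, \eta_i)$, $z^{(i)} := x_1^{(i)} \otimes x_2^{(i)} \otimes x_3^{(i)}$, and $w^{(i)} := y_1^{(i)} \otimes y_2^{(i)} \otimes y_3^{(i)}$, the tensor-product identities underlying \eqref{eq:3pdelta} yield
\[
\Delta_j z^{(i)} = (\alpha_i)_j\, \Delta_0\, z^{(i)}, \qquad w^{(i)H} \Delta_j = (\alpha_i)_j\, w^{(i)H} \Delta_0, \qquad j = 1,2,3,
\]
where $(\alpha_i)_j$ denotes the $j$-th component of $\alpha_i$. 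Once this reduction is in place, the lemma becomes a statement about simultaneous right and left eigenvectors of commuting pencils, in direct analogy with its 2-parameter counterpart in \cite{HKP}.

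For part (ii), I would evaluate $w^{(2)H} \Delta_j z^{(1)}$ in two ways: first substituting the right-eigenvector equation for $z^{(1)}$, then the left-eigenvector equation for $w^{(2)}$. Subtracting the two expressions yields
\[
\bigl( (\alpha_1)_j - (\alpha_2)_j \bigr)\, w^{(2)H} \Delta_0\, z^{(1)} = 0 \qquad \text{for } j = 1,2,3.
\]
Because $\alpha_1 \ne \alpha_2$, at least one of the three scalar coefficients is nonzero, and the desired orthogonality $w^{(2)H} \Delta_0\, z^{(1)} = 0$ follows.

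Part (i) is the substantive half, and it is where the simplicity assumption must be exploited. The plan is to pass to a \emph{generic} linear combination: choose scalars $c_1, c_2, c_3$ so that $\gamma := c_1 (\alpha_1)_1 + c_2 (\alpha_1)_2 + c_3 (\alpha_1)_3$ is distinct from the analogous combination $c_1 (\alpha_k)_1 + c_2 (\alpha_k)_2 + c_3 (\alpha_k)_3$ for every other eigenvalue $\alpha_k$ of \eqref{eq:mpep}. Then $z^{(1)}$ and $w^{(1)}$ are respectively a right and a left eigenvector of the single generalized pencil $c_1 \Delta_1 + c_2 \Delta_2 + c_3 \Delta_3 - t\, \Delta_0$ at the eigenvalue $t = \gamma$, so it remains to argue that $\gamma$ is a simple eigenvalue of this pencil, at which point the classical biorthogonality $w^{(1)H} \Delta_0\, z^{(1)} \ne 0$ for a simple generalized eigenvalue delivers the claim.

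The main obstacle I foresee is promoting the multiparameter simplicity of $\alpha_1$ to both geometric and algebraic simplicity of $\gamma$ in the reduced pencil. The intended workaround is to use that the matrices $\Delta_0^{-1}\Delta_j$ commute and so admit a joint Schur triangularization whose diagonal triples enumerate the multiparameter eigenvalues counted with multiplicity. Simplicity of $\alpha_1$ forces it to occupy precisely one diagonal position, and the generic choice of the $c_j$ separates $\gamma$ from every other diagonal entry of the corresponding linear combination, giving algebraic simplicity of $\gamma$ for free. With this in hand, (i) reduces to the textbook non-degeneracy of the $\Delta_0$-pairing between the left and right eigenvectors attached to a simple eigenvalue of a regular matrix pencil.
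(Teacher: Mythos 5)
The paper itself gives no proof of this lemma: it is stated with the remark that it is a straightforward generalization of the 2-parameter result in \cite{HKP}, so the comparison here is with the argument behind that cited result rather than with a written proof in this paper. Your proposal is correct, and for part (i) it takes a somewhat different route. Part (ii) is the standard two-way evaluation of $w^{(2)H}\Delta_j z^{(1)}$ and matches what one would do in the 2-parameter case verbatim. For part (i), the usual argument (as in \cite{HKP}, building on the structure theory for the commuting matrices $\Delta_0^{-1}\Delta_j$) works with the joint root subspaces and spectral projectors of the commuting tuple: simplicity makes the root subspace for $(\lambda_1,\mu_1,\eta_1)$ one-dimensional, left and right root subspaces belonging to distinct eigenvalues are $\Delta_0$-orthogonal, and nondegeneracy of the pairing then forces $(y^{(1)})^H\Delta_0 x^{(1)}\neq 0$. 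Your alternative—pass to a generic linear combination $c_1\Delta_1+c_2\Delta_2+c_3\Delta_3$ so that the simple joint eigenvalue maps to an algebraically simple eigenvalue of a single regular pencil, then invoke classical simple-eigenvalue biorthogonality—is valid: simultaneous triangularization of the commuting $\Delta_0^{-1}\Delta_j$ does enumerate the joint eigenvalues with multiplicity, simplicity means the triple occurs in exactly one diagonal position, and avoiding finitely many hyperplanes in $(c_1,c_2,c_3)$ yields the required separation. What your route buys is a reduction to textbook one-parameter facts without invoking the root-subspace machinery; what it quietly uses (and should be said explicitly) is that $\Delta_0$ is nonsingular—this is needed both to form the commuting matrices and for ``simple eigenvalue'' to have its intended meaning, and it is the standing assumption in the paper—and the standard fact that a decomposable left eigenvector satisfies $w^H\Delta_j=(\alpha)_j\,w^H\Delta_0$, which follows by applying the right-eigenvector identity to the adjoint problem with matrices $A_i^H,B_i^H,C_i^H,D_i^H$.
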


\medskip

Let $(\lambda_q,\mu_q,\eta_q)$ be the eigenvalues
that are already extracted along with the corresponding left and right eigenvectors $y_1^{(q)}\otimes y_2^{(q)}\otimes y_3^{(q)}$ and
$x_1^{(q)}\otimes x_2^{(q)}\otimes x_3^{(q)}$
for $q=1,\ldots,m$. The selection criterion outlined next is based on these eigenvectors.
In line~\ref{lin1:extraction} of Algorithm~\ref{alg:jd}, we select a Ritz value
such that the corresponding Ritz vector $u_1\otimes u_2\otimes u_3$, with $u_j = U_j^{(k)} s_j$ for $j = 1,2,3$,
satisfies
\begin{equation}\label{eq:selcrit}
 \max_{q=1,\ldots,m}
 \frac{\big|(y_1^{(q)}\otimes y_2^{(q)}\otimes y_3^{(q)})^H\Delta_0(u_1\otimes u_2\otimes u_3)\big|}
 {\big|(y_1^{(q)}\otimes y_2^{(q)}\otimes y_3^{(q)})^H\Delta_0(x_1^{(q)}\otimes x_2^{(q)}\otimes x_3^{(q)})\big|}< \xi
\end{equation}
for a given $\xi<1$, for instance $\xi=10^{-1}$.
Among those Ritz values satisfying the criterion, we choose the one closest to the prescribed
target.\medskip

\noindent
\textit{\textbf{Correction equation and preconditioning.}}
When the 
target is a point $(\lambda_0,\mu_0,\eta_0)$, we solve the correction equation in
line~\ref{lin1:correq} approximately by a Krylov subspace method, e.g., by GMRES. An important
feature of the Jacobi--Davidson method is the preconditioning applied to the correction
equation.
A good choice for a preconditioner is the inverse of $A_j-\lambda_0 B_j-\mu_0 C_j-\eta_0 D_j$.
Since this matrix has size $n_j\times n_j$, where $n_j$ is usually small compared to $n_1n_2n_3$,
this is a cheap operation.

If the
target is the plane $\eta=0$, then $\lambda_0$ and $\mu_0$ are not defined and we cannot use the preconditioning discussed above.
In this case, we often get good results if we solve the correction
equation exactly.
This is usually feasible, as in many applications $n_j$ is not large. We employ the expression
\[
	v_j=-u_j+(u_j^Hz_j)^{-1} \, z_j
\]
for the exact solution of the correction equation \eqref{eq:correq},
where $z_j := (A_j-\sigma B_j-\tau C_j-\psi D_j)^{-1}u_j$ for $j=1,2,3$; see \cite{SVDV} for the details. \medskip

\noindent
\textit{\textbf{Restarts.}} To keep the computation efficient, we restart Algorithm~\ref{alg:jd} in line~\ref{lin1:restart} when
the subspace becomes too large. As for the choice of
the new subspace of dimension $\ell$, we employ
\[U_j^{(k+1)}={\sf rgs}(u_j^{(k)}+v_j^{(k)},\ldots,u_j^{(k-\ell+1)}+v_j^{(k-\ell+1)}),\] where
$u_1^{(q)}\otimes u_2^{(q)}\otimes u_3^{(q)}$
is the Ritz vector 
and $v_j^{(q)}$, $j=1,2,3$, is the solution (exact or approximate) of the corresponding correction
equation \eqref{eq:correq} at iteration $q$. In this way, we build the new search space from the last $\ell$
eigenvector approximations. \medskip

\noindent
\textit{\textbf{Tensor Rayleigh Quotient Iteration.}}
The method performs better if we use Jacobi--Davidson up to a point when the residual of a Ritz pair is reasonably small,
i.e., smaller than $\delta$ in line~\ref{lin1:delta}, but still not smaller than $\varepsilon$
required for a convergence in line~\ref{lin1:convergence}. Whenever we find such a Ritz pair,
we refine it with the Tensor Rayleigh Quotient Iteration (TRQI),
which is a generalization of the standard Rayleigh quotient iteration and was also applied to a 2-parameter eigenvalue
problem in \cite{BorRD}.

Next we provide a brief description of the TRQI. An eigenpair of the 3-parameter eigenvalue problem is
a zero of the function
\[
 F(x,y,z,\lambda,\mu,\eta)
 =
 \left[
 \begin{matrix}(A_1-\lambda B_1-\mu C_1-\eta D_1)\,x\\[0.2em]
 (A_2-\lambda B_2-\mu C_2-\eta D_2)\,y\\[0.2em]
 (A_3-\lambda B_3-\mu C_3-\eta D_3)\,z\\[0.2em]
 u^Hx-1\\[0.2em]
 v^Hy-1\\[0.2em]
 w^Hz-1
 \end{matrix}
 \right],
\]
where constant vectors $u,v,w$, not orthogonal to $x,y,z$, respectively,
are used for normalization.
If $(x_k,y_k,z_k,\lambda_k,\mu_k,\eta_k)$ is an approximation
for a zero of $F$, then we may use Newton's method to obtain a new approximation
$(x_k+\Delta x_k,y_k+\Delta y_k,z_k+\Delta z_k,\lambda_k+\Delta\lambda_k,\mu_k+\Delta\mu_k,\eta_k+\Delta\eta_k)$.
In the TRQI, we start with an eigenvector approximation $x_k\otimes y_k\otimes z_k$, where $\|x_k\|=\|y_k\|=\|z_k\|=1$.
As an approximation
$(\lambda_k,\mu_k,\eta_k)$ for the corresponding eigenvalue, we use the tensor Rayleigh quotient
\begin{align*}
 \lambda_k &= \frac{(x_k\otimes y_k\otimes z_k)^H\Delta_1 (x_k\otimes y_k\otimes z_k)}
   {(x_k\otimes y_k\otimes z_k)^H\Delta_0 (x_k\otimes y_k\otimes z_k)},\\
 \mu_k &= \frac{(x_k\otimes y_k\otimes z_k)^H\Delta_2 (x_k\otimes y_k\otimes z_k)}
   {(x_k\otimes y_k\otimes z_k)^H\Delta_0 (x_k\otimes y_k\otimes z_k)},\\
 \eta_k &= \frac{(x_k\otimes y_k\otimes z_k)^H\Delta_3 (x_k\otimes y_k\otimes z_k)}
   {(x_k\otimes y_k\otimes z_k)^H\Delta_0 (x_k\otimes y_k\otimes z_k)},
\end{align*}
and set $x_{k+1},y_{k+1},z_{k+1}$ equal to the vectors $x_k+\Delta x_k,y_k+\Delta y_k,z_k+\Delta z_k$
that we get from one step of Newton's method
with an initial approximation $(x_k,y_k,z_k,\lambda_k,\mu_k,\eta_k)$. In this Newton step, we set $u=x_k$, $v=y_k$, and $w=z_k$.

Note that when none of $n_1,n_2,n_3$ is large, one step of the TRQI might be less expensive
than one iteration of the Jacobi--Davidson method and it is more efficient to switch to the TRQI to extract
the eigenpair once the Jacobi--Davidson method gets close enough.
The choice of the parameter $\delta$ requires care.
If we set $\delta$ too large, then the TRQI refinement is applied to poor
candidates, and the TRQI might converge to an eigenvalue that is not close to the target or an eigenvalue
that is already extracted. On the other hand, if $\delta$ is too small, then the condition in line~\ref{lin1:delta}
might never be fulfilled, and the method might not return any eigenvalues.
\medskip

\noindent
\textit{\textbf{Harmonic Ritz values.}} Last but not least, let us note
that although it is straightforward to generalize harmonic Ritz values from \cite{HP08} to 3-parameter eigenvalue problems,
we omit this ingredient in the description of the algorithm for simplicity.
We do not use harmonic Ritz values in the numerical experiments with
the Jacobi--Davidson method in Section~\ref{sec:NumResults},
but the use of harmonic Ritz values is an option in the implementation of Algorithm~\ref{alg:jd} in \MatlabPackage~\cite{BorMC1}.

\subsection{Use of a Krylov subspace method with full size tensor vectors}\label{subs:fullKrylov}
To find eigenvalues with the smallest $|\eta|$, we can also consider methods that
operate on the generalized eigenvalue problem $\Delta_3 z=\eta \Delta_0 z$. We present some alternatives
in this subsection and in the succeeding two subsections.

We consider a Krylov subspace method for \eqref{eq:3pdelta}, which means that in each step we have to solve
a linear system
	\begin{equation}\label{eq:sys3p}
		\Delta_3 w=\Delta_0 v
	 \end{equation}
for the unknown $w$ efficiently. While we can exploit the connection of such
linear systems to Sylvester equations in the 2-parameter
case, it does not seem straightforward to extend the Sylvester equation approach to the 3-parameter setting. Consequently,
it remains an open problem how to solve \eqref{eq:sys3p} with a complexity below ${\cal O}(n_1^3 n_2^3 n_3^3)$.

More specifically, by introducing the vectorizations $v={\rm vec}(\mathcal V)$ and $w={\rm vec}(\mathcal W)$, where
$\mathcal V,\mathcal W\in\CC^{n_1\times n_2\times n_3}$ are three dimensional tensors, it is
possible to express \eqref{eq:sys3p} as
\begin{equation}\label{eq:3pleft}\mathcal W\times_1 B_1\times_2 C_2\times_3 A_3\ +\
\mathcal W\times_1 C_1\times_2 A_2\times_3 B_3\ +\
\cdots\ -\ \mathcal W\times_1 A_1\times_2 C_2\times_3 B_3 =\mathcal{M},
\end{equation}
where the right hand side is
$
\mathcal{M}=\mathcal V\times_1 B_1\times_2 C_2\times_3 D_3\ +\ \mathcal V\times_1 C_1\times_2 D_2\times_3 B_3\ +\
\cdots\ -\ \mathcal V\times_1 D_1\times_2 C_2\times_3 B_3
$,
and $\times_j$ denotes the $j$-node product for $j=1,2,3$.
Equation \eqref{eq:3pleft} resembles a Sylvester equation in three
dimensions, but has too many terms. Namely, in three
dimensions the Sylvester equation has the form
\begin{equation}\label{eq:sylv3d}
 \mathcal{X}\times_1 A \ +\ \mathcal{X}\times_2 B \ +\ \mathcal{X}\times_3 C = \mathcal{Y}.
\end{equation}
Using
Schur decompositions for matrices
$A,B$, and $C$,
one can solve
\eqref{eq:sylv3d} efficiently by a generalization of the Bartels--Stewart algorithm;
see \cite{LiSylv} for details. Unfortunately, in our setting, we have six nonzero terms in \eqref{eq:3pleft},
and it does not seem possible to write this equation in the form \eqref{eq:sylv3d}.

\subsection{Subspace Iteration}\label{sec:3SubIter_v1}
If $n_1 n_2 n_3$ is too large for the approach in the previous subsection,
then we can apply subspace iteration to \eqref{eq:3pdelta}
in a way similar to its counterpart for the 2-parameter case, using low-rank approximations to make the computation feasible.
\emph{The exact subspace iteration} with full vectors operates as follows. We start with a matrix $Z_0\in\CC^{n_1n_2n_3\times p}$ with orthonormal columns.
In each step, for a given $Z_k$, we solve the linear system $\Delta_3 W_k=\Delta_0 Z_k$ for $W_k$, and then set $Z_{k+1}$ equal to the $Q$
factor in the QR decomposition of $W_k$. Typically, the columns of $Z_k$ converge to an orthonormal basis for the dominant invariant subspace of
$\Delta_3^{-1}\Delta_0$, and $Z_k^T\Delta_3^{-1}\Delta_0Z_k$ 
converges to an upper triangular matrix with $p$ dominant eigenvalues $\eta$ of \ref{eq:3pdelta}
on the diagonal.

As the full columns of $Z_k$ are too large, we use low-rank approximations.
We call this variant \emph{inexact subspace iteration}.
Specifically, we suppose that all columns of $Z_k\in\CC^{n_1n_2n_3\times p}$ lie in a subspace spanned by
$U_1^{(k)}\otimes U_2^{(k)}\otimes U_3^{(k)}$ for $U_j^{(k)}\in\CC^{n_j\times \ell}$ for $j = 1,2,3$.
The columns of $Z_k\in\CC^{n_1n_2n_3\times p}$ are represented in the Tucker format
\begin{equation}\label{eq:tucker3}
 z_i^{(k)}={\rm vec}({\mathcal D}_i^{(k)}\times_1 U_1^{(k)}\times_2 U_2^{(k)}\times_3 U_3^{(k)}),
\end{equation}
where ${\mathcal D}_i^{(k)}$ is an $\ell\times\ell\times\ell$ core tensor for $i=1,\ldots,p$,
where $p\le\ell$. Each iteration proceeds as follows.
\begin{enumerate}
 \item[1)] Solve the linear system $\Delta_3 w_i^{(k)}=\Delta_0 z_i^{(k)}$ approximately for $i=1,\ldots,p$
 (see below for details), and orthonormalize the solution vectors $w_1^{(k)},\ldots,w_p^{(k)}$.
 \item[2)] Replace the orthonormalized solutions with their low-rank approximations, which leads to
 $z_1^{(k+1)},\ldots,z_p^{(k+1)}$ forming the columns of $Z_{k+1}$ for the next step.
\end{enumerate}
In the second step, $z_i^{(k+1)} = {\rm vec}({\mathcal D}_i^{(k+1)} \times_1 U_{1}^{(k+1)} \times_2 U_{2}^{(k+1)} \times_3 U_{3}^{(k+1)})$
for some $U_{j}^{(k+1)} \in \CC^{n_j\times \ell}, \; j = 1,2,3$. We explain how to form $U_j^{(k+1)}$ in Algorithm~\ref{alg:uk1}
at the end of this subsection.

The main part of the  inexact subspace iteration 
is to solve the linear systems $\Delta_3 w_i^{(k)}=\Delta_0 z_i^{(k)}$ for $i=1,\ldots,p$ approximately
by using low-rank approximations. This 
is justified by the following argument. When $v$ in \eqref{eq:sys3p}
is an eigenvector of \eqref{eq:3pdelta}, which implies $v=v_1\otimes v_2\otimes v_3$ is a decomposable tensor,
then the right-hand side of \eqref{eq:sys3p} is a sum
\[
 B_1v_1\otimes C_2v_2\otimes D_3v_3+ C_1v_1\otimes D_2v_2\otimes B_3v_3 +\cdots - D_1v_1\otimes C_2v_2\otimes B_3v_3
\]
of six rank-one tensors.
In this case, the solution $w$ of \eqref{eq:sys3p} is also an eigenvector of \eqref{eq:3pdelta}, and
has rank one. As in the exact subspace iteration, the columns of $Z_k$ converge to
linear combinations of a small number of dominant eigenvectors and it is 
reasonable to use low-rank approximations
for the solutions of the linear systems $\Delta_3 w_i^{(k)}=\Delta_0 z_i^{(k)}$ for $i=1,\ldots,p$.

Although we cannot write \eqref{eq:3pleft} as a Sylvester equation in the 3-parameter setting, we can borrow some ideas from
the Krylov method for the 2-parameter case that is based on the
solutions of Sylvester equations by the low-rank approximation
approach due to Hu--Reichel. In particular, suppose that we are looking for a low-rank approximation of the solution of
\eqref{eq:sys3p}.
Let us assume that $A_1,A_2$, and $A_3$ are nonsingular.
Then \eqref{eq:sys3p} is equivalent to
\begin{equation}\label{eq:btilda}\nonumber
\widetilde \Delta_3 w \;\; := \;\; \left|
 \begin{array}{ccc}
 \widetilde B_1 & \widetilde C_1 & I \\
 \widetilde B_2 & \widetilde C_2 & I \\
 \widetilde B_3 & \widetilde C_3 & I
 \end{array}
 \right|_\otimes w \;\; = \;\;
 \left|
 \begin{array}{ccc}
 \widetilde B_1 & \widetilde C_1 & \widetilde D_1 \\
 \widetilde B_2 & \widetilde C_2 & \widetilde D_2 \\
 \widetilde B_3 & \widetilde C_3 & \widetilde D_3
 \end{array}
 \right|_{\otimes}v \;\; =: \;\; \widetilde \Delta_0 v,
\end{equation}
where $\widetilde\Delta_3=(A_1\otimes A_2\otimes A_3)^{-1}\Delta_3$,
$\widetilde\Delta_0=(A_1\otimes A_2\otimes A_3)^{-1}\Delta_0$, $\widetilde{B}_i=A_i^{-1}B_i$, $\widetilde{C}_i=A_i^{-1}C_i$,
$\widetilde{D}_i=A_i^{-1}D_i$. Observe that
for $v$ of the form $v = {\rm vec }({\mathcal D}\times_1 U_1^{(k)} \times_2 U_2^{(k)} \times_3 U_3^{(k)} )$
for some ${\mathcal D}$, the vector $\widetilde \Delta_0 v$
lies in the subspace spanned by $F_1\otimes F_2\otimes F_3$,
where $F_j={\rm span} \{ \widetilde B_j U_j^{(k)},\widetilde C_j U_j^{(k)}, \widetilde D_j U_j^{(k)} \}$
for $j=1,2,3$. Our low-rank approach employs the generalized Krylov subspaces
\begin{equation}\label{eq:KrlyGNew}
 {\mathcal K}_r(\widetilde B_j,\widetilde C_j,F_j):={\rm span}\{
 M_0(\widetilde B_j,\widetilde C_j, F_j),\,
 M_1(\widetilde B_j,\widetilde C_j, F_j),\,
 \ldots,\,
 M_{r-1}(\widetilde B_j,\widetilde C_j, F_j)
 \}
\end{equation}
for a modest $r$,
where $M_0(\widetilde B_j,\widetilde C_j, F_j)=F_j$ and
$M_{i+1}(\widetilde B_j,\widetilde C_j, F_j) =
\big[\widetilde B_jM_i(\widetilde B_j,\widetilde C_j, F_j) \ \ \widetilde C_jM_i(\widetilde B_j,\widetilde C_j, F_j)\big]$
for $i>0$. This is a generalization of the Krylov subspaces used in the Hu--Reichel method; cf.~\cite{LiYe}.
An approximate solution of \eqref{eq:sys3p} is assumed to be of the form
\[
 w = {\rm vec}(\mathcal Y\times_1 Q_1\times_2 Q_2\times_3 Q_3)
 \in {\mathcal K}_r(\widetilde B_1,\widetilde C_1,F_1)\otimes
 {\mathcal K}_r(\widetilde B_2,\widetilde C_2,F_2)\otimes
 {\mathcal K}_r(\widetilde B_3,\widetilde C_3,F_3),
\]
where $Q_j$ is a matrix whose columns form an orthonormal basis for ${\mathcal K}_r(\widetilde B_j,\widetilde C_j,F_j)$, and
$\mathcal Y$ is the solution of the projected equation
\[
 \left|
 \begin{array}{ccc}
 Q_1^H\widetilde B_1Q_1 & Q_1^H\widetilde C_1Q_1 & I \\
 Q_2^H\widetilde B_2Q_2 & Q_2^H\widetilde C_2Q_2 & I \\
 Q_3^H\widetilde B_3Q_3 & Q_3^H\widetilde C_3Q_3 & I
 \end{array}
 \right|_\otimes
 {\rm vec}({\mathcal Y}) \;\; = \;\;
 \left|
 \begin{array}{ccc}
 Q_1^H\widetilde B_1Q_1 & Q_1^H\widetilde C_1Q_1 & Q_1^H\widetilde D_1Q_1 \\
 Q_2^H\widetilde B_2Q_2 & Q_2^H\widetilde C_2Q_2 & Q_2^H \widetilde D_2Q_2 \\
 Q_3^H\widetilde B_3Q_3 & Q_2^H\widetilde C_3Q_3 & Q_3^H\widetilde D_3 Q_3
 \end{array}
 \right|_{\otimes} (Q_1^H\otimes Q_2^H\otimes Q_3^H) v
\]
that satisfies the Galerkin condition that the residual is orthogonal to the subspace ${\rm span} \{ Q_1\otimes Q_2\otimes Q_3 \}$.
In the 2-parameter case, we can exploit the relation to the Sylvester equation to solve the projected equation efficiently. 
As explained in the previous subsection, we are not aware of such a relation
in the 3-parameter setting.
Hence, we solve the projected systems directly. For this reason, the dimension of the subspace $Q_1\otimes Q_2\otimes Q_3$
cannot grow too large.

The above procedure yields vectors $w_i^{(k)}={\rm vec}({\mathcal Y}_i \times_1 Q_1\times_2 Q_2\times_3 Q_3)$ for $i=1,\ldots,p$, which are
orthonormalized into $\widetilde w_i^{(k)}={\rm vec}(\widetilde {\mathcal Y}_i \times_1 Q_1\times_2 Q_2\times_3 Q_3)$ for $i=1,\ldots,p$
by the Gram--Schmidt procedure. We remark that the orthonormalization affects only the core tensors while the subspace bases
$Q_1$, $Q_2$, $Q_3$ do not change. After orthonormalization, we approximate $\widetilde w_1^{(k)},\ldots,\widetilde w_p^{(k)}$
by their orthogonal projections onto a low-dimensional subspace ${\rm span} \{ U_1^{(k+1)}\otimes U_2^{(k+1)}\otimes U_3^{(k+1)} \}$
for some $U_{j}^{(k+1)} \in \CC^{n_j\times \ell}$ such that ${\rm span} \{ U_j^{(k+1)} \} \; \subset \; {\rm span}\{ Q_j \}$.

Finally, we discuss
a feasible approach to construct a suitable $\ell\times \ell \times \ell$ dimensional subspace
${\rm span} \{ U_1^{(k+1)} \otimes U_2^{(k+1)} \otimes U_3^{(k+1)} \}$ of ${\rm span}\{Q_1\otimes Q_2\otimes Q_3\}$.
As we apply a subspace iteration, we
expect that, near convergence, $\widetilde w_1^{(k)}$ is close to the dominant eigenvector,
which is a decomposable tensor. Furthermore, $\widetilde w_2^{(k)}$ should be close to a linear combination of the dominant two
eigenvectors, and so on. Thus, we construct the $\ell\times \ell\times \ell$ dimensional subspace
${\rm span} \{ U_1^{(k+1)}\otimes U_2^{(k+1)}\otimes U_3^{(k+1)} \}$ by considering $\widetilde w_1^{(k)}$ first. We determine
a subspace ${\rm span}\{ V_1\otimes V_2\otimes V_3\}$ that contains a good low-rank
approximation of $\widetilde w_1^{(k)}$, where $V_j$ is an $n_j\times m_j$ matrix with orthonormal columns and $m_j \le \ell$ for $j=1,2,3$.
While the best low-rank approximation is well-defined and easy to compute in the 2-parameter case,
this is more complicated in the 3-parameter setting, where the available tools
are the multilinear singular value decomposition or a low multilinear rank approximation, see, e.g., \cite{Bader}. Once we obtain a low-rank
approximation for $\widetilde w_1^{(k)}$, we take $V_j$ as the starting column block of $U_j^{(k+1)}$. Then we
project the next vector $\widetilde w_2^{(k)}$ onto the orthogonal complement of
${\rm span} \{ V_1\otimes V_2\otimes V_3 \}$, and find a new column block for
$U_j^{(k+1)}$
from a low-rank approximation of the projected vector. We continue this way 
until
we collect enough columns for $U_j^{(k+1)}$ for $j=1,2,3$.
This construction is 
described
in Algorithm~\ref{alg:uk1}. Finally, it is worth remarking that for small $k$, when the subspace is far from
 an invariant one,
 we can expect to get all columns of $U_j^{(k+1)}$ for $j=1,2,3$
just from a low-rank approximation of $\widetilde w_1^{(k)}$, while at later iterations, after $\widetilde w_1^{(k)}$
has already converged to a dominant eigenvector, we obtain only the first column of $U_j^{(k+1)}$ from $\widetilde w_1^{(k)}$,
and the remaining ones 
from $\widetilde w_2^{(k)},\ldots,\widetilde w_p^{(k)}$.

\begin{algorithm}[ht]
\caption{\emph{Computation of matrices $U_1^{(k+1)},U_2^{(k+1)},U_3^{(k+1)}$ with orthonormal columns forming a basis
for low-rank approximations of $w_1^{(k)},\ldots,w_p^{(k)}$, which are the vectors generated by the inexact subspace iteration
at step $k$. \label{alg:uk1}}}
\begin{algorithmic}[1]
\STATE $U_1^{(k+1)}=[]$, $U_2^{(k+1)}=[]$, $U_3^{(k+1)}=[]$, $m=0$
\FOR {$q=1, \ldots,p$ and while $m<\ell$}
\STATE $z= \left( I \: - \: [U_1^{(k+1)}\otimes U_2^{(k+1)}\otimes U_3^{(k+1)}] [U_1^{(k+1)}\otimes U_2^{(k+1)}\otimes U_3^{(k+1)}]^H \right)w_q^{(k)}$
\STATE Find matrices $V_1,V_2,V_3$ with $s\le \ell-m$ orthonormal columns that are used for a low rank approximation of $z$
in ${\rm span}\{V_1\otimes V_2\otimes V_3\}$.
\STATE $U_j^{(k+1)}= \big[ U_j^{(k+1)} \; V_j \big]$ for $j = 1,2,3$.
\STATE $m=m+s$.
\ENDFOR
\end{algorithmic}
\end{algorithm}

\subsection{Subspace iteration with Arnoldi expansion}\label{sec:3SubIter}
The inexact subspace
iteration for the 3-para\-me\-ter eigenvalue problem presented
in the previous subsection is inspired from the ideas in \cite{KarlBor}
for the 2-parameter case. Here, we further simplify that approach by avoiding
the explicit use of low-rank approximations, giving rise to a method that is easier to implement.
We will add some new features that are not present in the 2-parameter version in \cite{KarlBor},
which improve the efficiency of the approach substantially in the 3-parameter case.
Some of the new features, for instance the selection criterion
from Section~\ref{subsec:JD}, can be adopted in the 2-parameter version in a straightforward way.

In the inexact subspace iteration of the previous subsection, the approximate solutions of the linear systems
at step $k$ are assumed to lie in ${\rm span} \{ Q_1\otimes Q_2\otimes Q_3 \}$, where the columns of $Q_j$ form
an orthonormal basis for the generalized Krylov subspace ${\mathcal K}_r(\widetilde B_j,\widetilde C_j,F_j)$ defined in
\eqref{eq:KrlyGNew} for $j=1,2,3$. These spaces contain many approximations for the eigenvectors that we can use
to form the next subspace ${\rm span} \{ U_1^{(k+1)}\otimes U_2^{(k+1)}\otimes U_3^{(k+1)} \}$.
Here,
 we form the
subspace from $\ell$ Ritz vectors of 
the $\ell$ Ritz values with the smallest
$|\psi|$ of the projected 3-parameter eigenvalue problem
\begin{align}
Q_1^HA_1Q_1 \, s_1 &= \sigma \, Q_1^HB_1Q_1 \, s_1+ \tau \, Q_1^HC_1Q_1 \, s_1 + \psi \, Q_1^HD_1Q_1 \, s_1 \nonumber \\
Q_2^HA_2Q_2 \, s_2 &= \sigma \, Q_2^HB_2Q_2 \, s_2+ \tau \, Q_2^HC_2Q_2 \, s_2 + \psi \, Q_2^HD_2Q_2 \, s_2 \label{eq:proj3p} \\
Q_3^HA_3Q_3 \, s_3 &= \sigma \, Q_3^HB_3Q_3 \, s_3+ \tau \, Q_3^HC_3Q_3 \, s_3 + \psi \, Q_3^HD_3Q_3 \, s_3. \nonumber
\end{align}
As each Ritz vector is decomposable, we form $U_j^{(k+1)}$ such that its columns form an orthonormal basis for the subspace
spanned by the $j$-nodes of the selected Ritz vectors.
Note that this approach is close to the methods based on tensor decompositions such as those in \cite{krsv16} and \cite{krto11}.
The main difference is that our approach only uses the factor matrices $Q_1,Q_2,Q_3$ of a Tucker decomposition, i.e.,
the core tensor is not used.
A formal description of the approach is given in Algorithm~\ref{alg3} and
some details are discussed below.\medskip

\begin{algorithm}[ht]
\caption{\emph{Subspace iteration with Arnoldi expansion and restarts based on selected Ritz vectors for the generalized
eigenvalue problem \eqref{eq:sys3p} associated with the 3-parameter eigenvalue problem.} \\
In the algorithm, $\ell$ denotes the size of the subspace after a restart, $r$ is the number of block Arnoldi steps,
$\varepsilon$ is used in the convergence criterion for an eigenvalue, and $\delta > \varepsilon$ controls when
a Ritz pair is a candidate for the TRQI refinement.
\label{alg3}}
\begin{algorithmic}[1]
\STATE Choose initial matrices $U_j^{(0)}\in\CC^{n_j\times \ell}$ with orthonormal columns for $j=1,2,3$.
\FOR {$k=0, 1, \ldots$}
\FOR {$j=1,2,3$}
\STATE $F_j=[A_j^{-1}B_jU_j^{(k)} \ \ A_j^{-1}C_jU_j^{(k)} \ \ A_j^{-1}D_jU_j^{(k)}]$\label{lin2:f}
\STATE Form $Q_j$ whose columns are orthonormal basis
for ${\cal K}_r(A_j^{-1}B_j,A_j^{-1}C_j,F_j)$ using a block Arnoldi algorithm with SVD filtering;
see Algorithm~\ref{alg:blockAr}.\label{lin2:arnoldi}
\ENDFOR
\IF{the size of $Q_1\otimes Q_2\otimes Q_3$ is too large}
\STATE Shrink matrices $Q_1$, $Q_2$, $Q_3$ by the same factor by removing the appropriate number of the last columns.\label{lin2:shrink}
\ENDIF
\STATE Compute $m$ Ritz values $(\sigma_i,\tau_i,\psi_i)$ and 
Ritz vectors\label{lin2:extract}
$z_1^{(i)}\otimes z_2^{(i)}\otimes z_3^{(i)}:=Q_1s_1^{(i)} \otimes Q_2s_2^{(i)}\otimes Q_3s_3^{(i)}$ for $i=1,\ldots,m$
with the smallest values of $|\psi|$ from the projected 3-parameter eigenvalue problem \eqref{eq:proj3p}.
\STATE Refine Ritz pairs $\big((\sigma_i,\tau_i,\psi_i),z_1^{(i)}\otimes z_2^{(i)}\otimes z_3^{(i)}\big)$\label{lin2:refine}
for $i=1,\ldots,m$
by $s\ge 0$ steps of the TRQI.
\FOR {$i=1,\ldots,m$}
\IF {the Ritz pair $\big((\sigma_i,\tau_i,\psi_i),\,z_1^{(i)}\otimes z_2^{(i)}\otimes z_3^{(i)}\big)$ satisfies the selection criterion}\label{lin2:usesel}
\STATE Compute the residual $r_{ij}=(A_j-\sigma_i B_j-\tau_i C_j - \psi_i D_j)\,z_j^{(i)}$ for $j=1,2,3$.
\IF {$(\|r_{i1}\|^2+\|r_{i2}\|^2+\|r_{i3}\|^2)^{1/2}\le \delta$}\label{lin2:delta}
\STATE Further refine the Ritz pair with $t\ge 0$ steps of the TRQI and update the residuals.\label{lin2:furtrefine}
\IF{refined pair satisfies the selection criterion \textbf{and} $(\|r_{i1}\|^2+\|r_{i2}\|^2+\|r_{i3}\|^2)^{1/2}\le \varepsilon$}\label{lin2:convergence0}
\STATE Extract the eigenpair and compute the corresponding left eigenvector
\label{lin2:convergence}
\ENDIF
\ENDIF
\ENDIF
\ENDFOR
\STATE Let $p_1,\ldots,p_\ell$ be the indices of the first $\ell$ Ritz pairs that satisfied the selection criterion, but did not lead
to an eigenpair.
\STATE Form $U_j^{(k+1)}$
whose columns make
an orthonormal basis for
${\rm span} \{ z_j^{(p_1)},\ldots,z_j^{(p_\ell)} \}$ for $j=1,2,3$.\label{lin2:restart}
\ENDFOR
\end{algorithmic}
\end{algorithm}

\noindent
\textit{\textbf{Block Arnoldi Algorithm with SVD Filtering.}} The block Arnoldi algorithm in line \ref{lin2:arnoldi}
employed together with an SVD filtering
is presented in Algorithm~\ref{alg:blockAr}. In the 3-parameter setting, we are quite limited
in the maximum search space. In particular, if the size of the subspace ${\rm span} \{ Q_1\otimes Q_2\otimes Q_3 \}$
is too large, then we cannot solve the projected problem in line \ref{lin2:extract}.
 Hence, we use the SVD filtering
and the relative cutoff parameter $\zeta \ge 0$
to prevent on the one hand the search space to grow
too much, and on the other hand 
to keep all the significant directions in the subspace. In our experiments,
$\zeta = 10^{-5}$
gives good results in practice.\medskip

\noindent
\textit{\textbf{Selection Criterion.}} In line \ref{lin2:usesel} of Algorithm~\ref{alg3}, we use the same selection criterion
as in the Jacobi--Davidson method,
defined by \eqref{eq:selcrit}.
As we need the left eigenvectors corresponding to the eigenvalues that are already extracted
to check this criterion, we compute a left eigenvector in line~\ref{lin2:convergence} for each new eigenvalue that we
find. If a Ritz pair satisfies the selection criterion, it can still happen that the TRQI refinement converges to one of the
eigenvalues that is already extracted. Therefore, we test the selection criterion in line \ref{lin2:convergence0} once again
to make sure that an eigenvalue is not repeated.\medskip

\noindent
\textit{\textbf{TRQI Refinement.}} The convergence can be drastically improved if we refine all Ritz pairs
with a small number of TRQI steps in line~\ref{lin2:refine} of Algorithm~\ref{alg3}. This improves the directions that we use for a restart
in line~\ref{lin2:restart}, additionally it yields more candidates that satisfy the criterion in line~\ref{lin2:delta}. However,
we should not use too many refinement steps because even when the TRQI is applied to a poor approximation,
it can still converge to an eigenpair. In most cases, such a converged eigenpair is not close to the prescribed
target (e.g., it does not have a small $|\eta|$), or is an eigenpair that is already extracted.

If, after this initial TRQI refinement, the selection criterion is satisfied by a Ritz pair and the norm of
the corresponding residual is below $\delta$ in line~\ref{lin2:delta}, then
the TRQI refinement is applied once again to the candidate Ritz pair.
As in Algorithm~\ref{alg:jd}, the parameter
$\delta$ should be chosen with care.
Since we use only a few
steps of block Arnoldi to form our search space, we cannot expect it to contain very good approximations of the eigenvectors.
Hence, we perform the second stage of the TRQI on approximations with residuals that are reasonably small to
overcome their inaccuracy due to the crudeness of the subspaces.

\begin{algorithm}[ht]
\caption{\emph{Block Arnoldi expansion with an SVD filtering to form an orthonormal basis for ${\cal K}_r(B,C,F)$.} \\
In the algorithm, $\zeta\ge 0$ denotes the relative cutoff parameter for the singular values.
\label{alg:blockAr}}
\begin{algorithmic}[1]
\STATE Compute the singular value decomposition $F=U\Sigma V^T$.
\STATE Select $W=[u_1\ \cdots\ u_j]$, where $j$ is such that $\sigma_j\ge \zeta\sigma_1 >\sigma_{j+1}$, or \\
		$j$ is the number of columns of $F$.
\STATE $Q = W$
 \FOR {$k=1, \ldots,r$}
 \STATE $G=(I-QQ^H)[BW \ \ CW]$
 \STATE Compute the singular value decomposition $G=U\Sigma V^T$.
 \STATE Select $W=[u_1\ \cdots\ u_j]$, where $j$ is such that $\sigma_j\ge \zeta \sigma_1 >\sigma_{j+1}$, or \\
 		$j$ is the number of columns of $G$.
 \STATE $Q = [Q \ \ W]$
\ENDFOR
\end{algorithmic}
\end{algorithm}

\section{Numerical results}\label{sec:NumResults}
Algorithm~\ref{alg:jd} and Algorithm~\ref{alg3} are both implemented in Matlab package \MatlabPackage~\cite{BorMC1}. In this section,
we conduct numerical experiments with these implementations on several 3-parameter eigenvalue problems;
all 
of these examples are available in \MatlabPackage. The results have been obtained
using Matlab R2012b on a PC having 16GB RAM and an i5-4670 3.4 GHz CPU.

\subsection{Ellipsoidal wave equation}\label{sec:result_ellip_wave}
The first two numerical experiments are performed on the ellipsoidal wave equation described in Section~\ref{sec:motive_ellip_wave}
with the particular choices $x_0 = 1, y_0 = 1.5$, and $z_0 = 2$ for the radii of the semi-axes of the ellipsoid
and $\rho=\sigma=\tau=0$ for the configuration.
This problem was solved numerically using matrices of size $25\times 25$ and the approach from
Section~\ref{subs:fullKrylov} in \cite{Calin3}. Using Algorithms~\ref{alg:jd} and \ref{alg3}, we can work with
much larger matrices corresponding to finer discretizations, and obtain more accurate results for
the low eigenfrequencies.

We discretize \eqref{eq:main_ODE_sys} using the Chebyshev collocation on 300 points.
We know that all eigenvalues $(\lambda,\mu,\eta)$ of \eqref{eq:main_ODE_sys}
are real and such that $\eta>0$, see, e.g., \cite{Levitina}. As we are
interested in eigenvalues with $\eta$ closest to the target $\eta_{\rm tar}\ge 0$,
we apply the substitution
$(\widetilde{\lambda}, \widetilde{\mu},\widetilde{\eta})
 	=
 (\lambda+5,\mu,\eta - \eta_{\rm tar})$
 and search for eigenvalues close to $\widetilde \eta=0$ of the transformed problem, with
the coefficient matrices $\widetilde A_j=A_j+5B_j-\eta_{\rm tar} D_j$, $\widetilde B_j=B_j$,
$\widetilde C_j=C_j$, and $\widetilde D_j=D_j$ for $j=1,2,3$. We use the shift $5$
(where $5$ is more or less randomly chosen) to make $\widetilde A_j$ nonsingular in the case $\eta_{\rm tar}=0$; it
changes the $\lambda$ components of the eigenvalues,
but does not affect our search which is based on a prescribed target on the $\eta$ components of the eigenvalues.

Before applying the numerical methods we 
multiply the $j$th equation by $\widetilde A_j^{-1}$ for $j = 1,2,3$, after ensuring that
$\widetilde A_j$ is nonsingular.
This 
is equivalent to considering the  generalized eigenvalue problem
\vskip -2ex
\begin{equation}\label{eq:invertDelta}
(\widetilde A_1 \otimes \widetilde A_2\otimes \widetilde A_3)^{-1} \widetilde \Delta_3 z =
 \eta \, (\widetilde A_1 \otimes \widetilde A_2\otimes \widetilde A_3)^{-1} \widetilde \Delta_0 z
\end{equation}
instead of
$\widetilde \Delta_3 z =\eta \widetilde \Delta_0 z$. We do this because the Chebyshev collocation returns matrices
such that $\|\widetilde A_j\|\gg\|\widetilde B_j\|,\|\widetilde C_j\|,\|\widetilde D_j\|$
and $\widetilde A_j$ is ill-conditioned
for $j=1,2,3$, where $\| \cdot \|$ denotes the matrix 2-norm. These facts in turn imply that
$\|\widetilde \Delta_3\|\gg \|\widetilde \Delta_0\|$ and $\widetilde{\Delta}_3$ is ill-conditioned.
We expect that Ritz values of \eqref{eq:invertDelta} are better approximations
for the eigenvalues with the smallest value of $|\eta|$.
\smallskip

\begin{example}[Jacobi--Davidson on the Ellipsoidal Wave Equation]\label{ex:ell1}
{\rm
We apply Algorithm~\ref{alg:jd} where
we set the plane $\eta=0$ as the target and solve the correction equation exactly.
We restrict the subspace dimensions between 5 and 10; in particular we restart using the 
eigenvector approximations from the last five iterations. In line~\ref{lin1:extraction} the Ritz values
are arranged in increasing order according to their distances from the target. We consider a Ritz pair
as a candidate for an eigenpair if its residual is smaller than $\delta=10^{-1}$, and
if it satisfies the selection criterion \eqref{eq:selcrit} with $\xi_1=10^{-1}$. In this case,
we refine the Ritz pair with up to 4 steps of
the TRQI. After the refinement, if the residual drops below $\varepsilon=10^{-8}$ and if the
selection criterion with $\xi_2=10^{-4}$ is satisfied, then the refined pair is accepted as a new eigenpair.
We can extract more than one eigenvalue from the same subspace
(without executing the else part of the if statement, that is without executing lines \ref{lin1:correq}--\ref{lin1:restart}).

We have computed 80 eigenvalues for the following three cases:
\begin{enumerate}
\item[a)] $\eta_{\rm tar}=0$, the target eigenvalues are exterior;
\item[b)] $\eta_{\rm tar}=200$, the desired eigenvalues are close to the exterior ones, since there are only a
few hundred eigenvalues with their $\eta$ component satisfying $\eta<200$;
\item[c)] $\eta_{\rm tar}=1000$, the target eigenvalues are mildly interior and more
difficult to compute.
\end{enumerate}
The computational times for cases a), b) and c) are 7, 480 and 540 seconds for
15, 472 and 594 iterations, respectively.
Figure \ref{fig:elljd1} shows the values of $|\eta-\eta_{\rm tar}|$ of the computed eigenvalues in
the order of retrieval. In case a) the eigenvalues converge almost in the desired order.
In case b) the eigenvalues are not computed
in such a desirable order (i.e., the monotonicity of the distances of the $\eta$ components to the prescribed target
with respect to the order of the retrieval degrades slightly), but the method still extracts the eigenvalues close
to the prescribed target. In case c) the eigenvalues
are retrieved even in a less-structured order and we need to compute many eigenvalues to be sure that we get the
desired eigenvalues closest to the target.

\begin{figure}[ht]
\begin{center}
\includegraphics[scale=0.047]{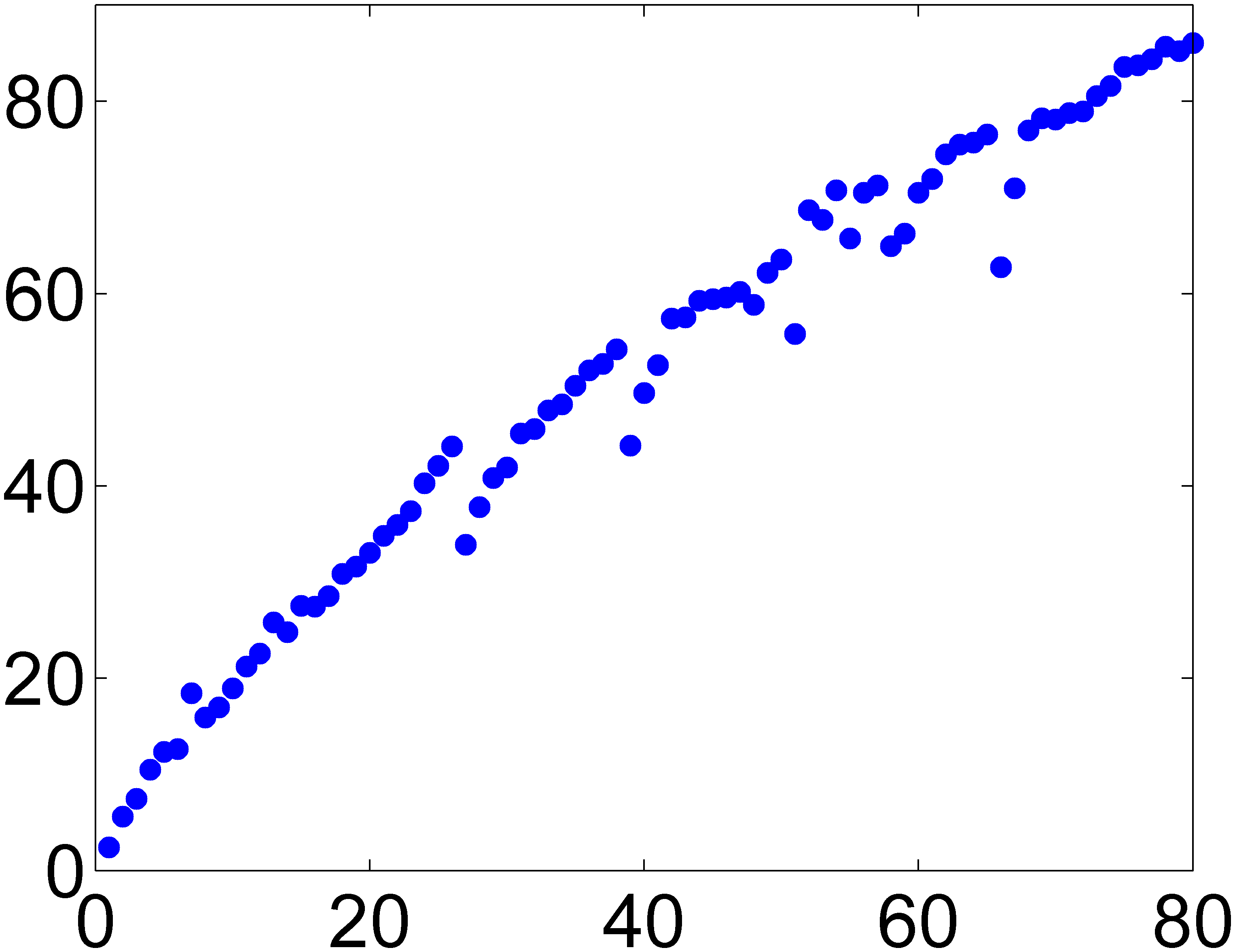} \
\includegraphics[scale=0.047]{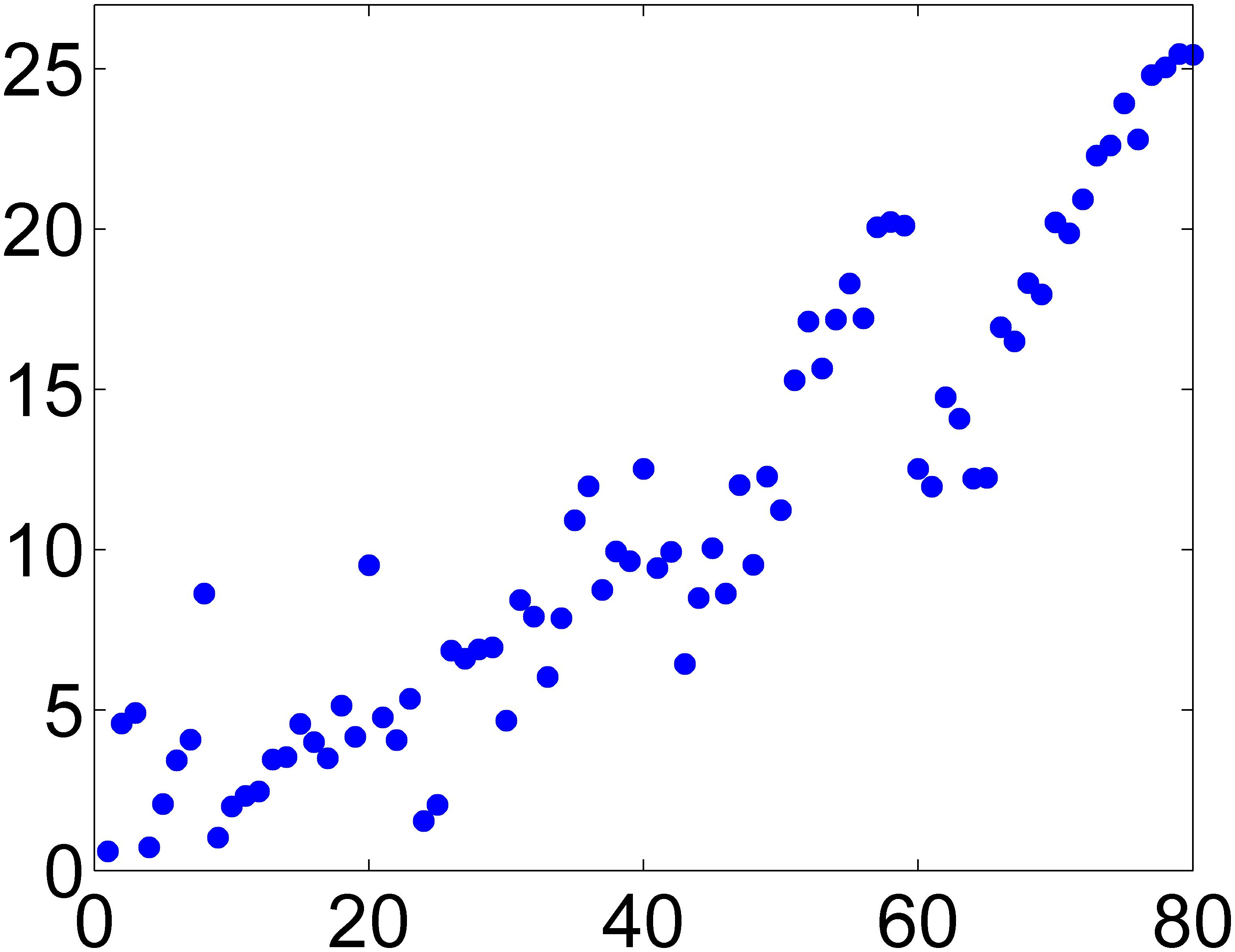} \
\includegraphics[scale=0.047]{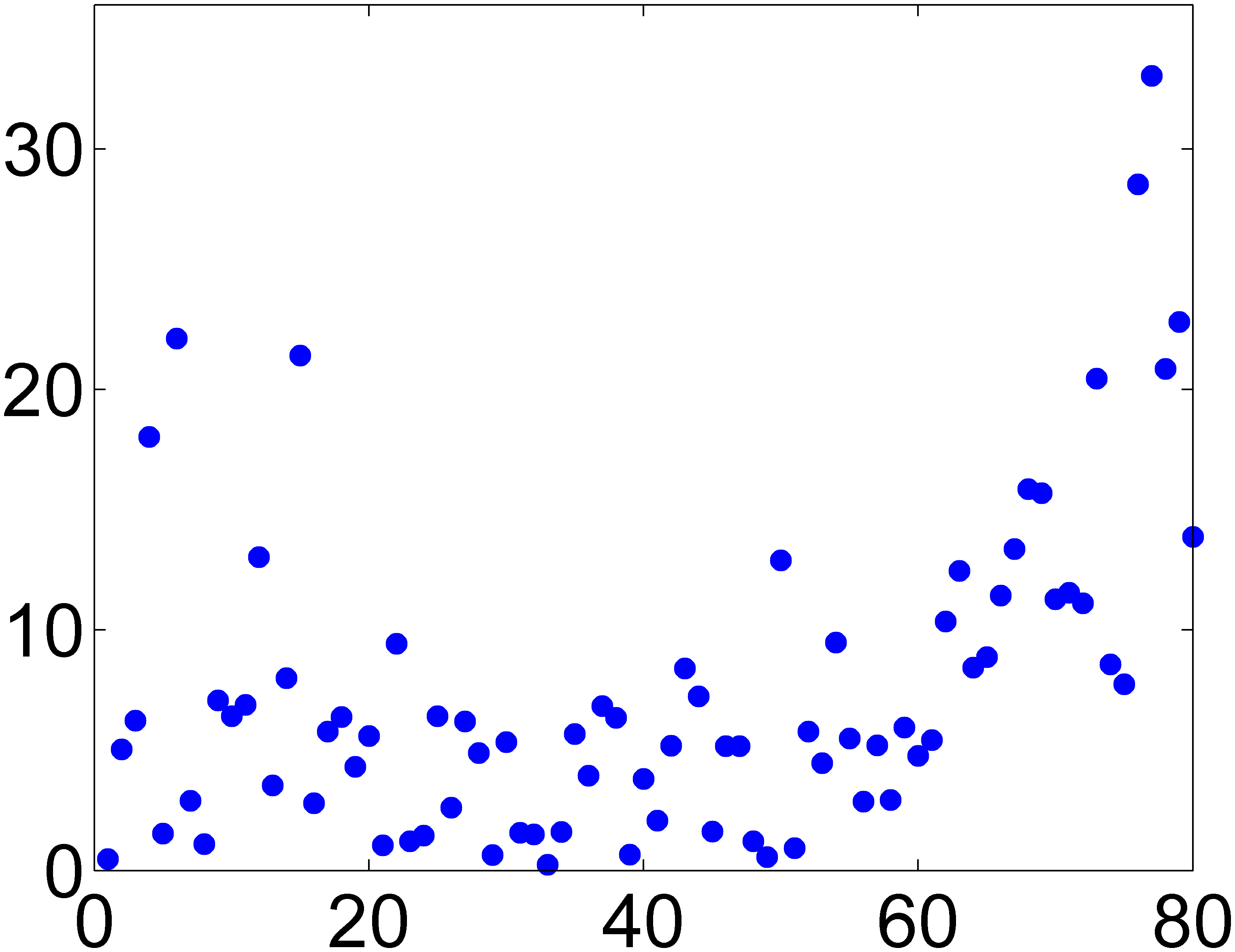}
\end{center}
\vspace{-0.7em}
\caption{Jacobi--Davidson method for the 3-parameter
eigenvalue problem in Section~\ref{sec:result_ellip_wave}.
The values $|\eta-\eta_{\rm tar}|$ (vertical axis) of the first 80 computed eigenvalues $(\lambda,\mu,\eta)$
are plotted with respect to the order of retrieval (horizontal axis) for the following cases:
a) $\eta_{\rm tar}=0$; (left), b) $\eta_{\rm tar}=200$ (middle); c) $\eta_{\rm tar}=1000$ (right).}
\label{fig:elljd1}
\vspace{-1em}
\end{figure}

We explored how many eigenvalues one needs to compute with the above settings
to get the first 40, 20 and 10 eigenvalues with their $\eta$ components closest to $\eta_{\rm tar}$
for cases a), b) and c), respectively. We decrease the number of targeted eigenvalues
for larger values of $\eta_{\rm tar}$, as interior eigenvalues are more difficult to compute.
In Table \ref{tab:ell_avg}, we report average results together with the best and worst run
of the algorithm over a set of 10 different random initial subspaces. To make sure that we
have all of the closest eigenvalues so that the comparisons are fair, we have computed
the eigenvalues a priori repeatedly several times.

\begin{table}[htb]
\caption{The Jacobi--Davidson method for the 3-parameter eigenvalue problem
in Section~\ref{sec:result_ellip_wave}.
Total number of eigenvalues that had to be computed, number of subspace updates that had to be performed
and computational times in order to retrieve the targeted number of eigenvalues $(\lambda,\mu,\eta)$ with their
$\eta$ components closest to $\eta_{\rm tar}$ are listed.
\label{tab:ell_avg}}
\begin{center}
{\footnotesize
\begin{tabular}{rcrcrrrrrrr}
 \hline
 & & \multicolumn{3}{c}{$\#$ Computed eigenvalues} & \multicolumn{3}{c}{$\#$ Subspace updates} & \multicolumn{3}{c}{Time (seconds)} \\
$\eta_{\rm tar}$ & $\#$ targeted & average & min & max & average & min & max & average & min & max \\
\hline \rule{0pt}{2.3ex}%
0 & 40 & 40 & 40 & 40 & 9.3 & 9 & 10 & 3.9 & 2.7 & 7.1 \cr
200 & 20 & 33.8 & 24 & 42 & 199.8 & 121 & 311 & 159.9 & 64.5 & 285.1\cr
1000 & 10 & 118.1 & 51 & 170 & 871.2 & 292 & 1354 & 771.3 & 264.8 & 1246.5\cr
 \hline
\vspace{-2em}
\end{tabular}}
\end{center}
\end{table}

\medskip

In Table \ref{tab:ellipsoid}, we provide the three computed eigenvalues closest to the target for cases
a), b), c). While the closest three eigenvalues for case a) have already been listed in \cite{Calin3},
it was not possible then to compute accurate solutions for cases b) and c) as this requires matrices
larger than the methods at that time could handle.

\begin{table}[htb]
\caption{A list of three eigenvalues $(\lambda,\mu,\eta)$ with their $\eta$ components closest to the targets
$\eta_{\rm tar}=0$, $\eta_{\rm tar}=200$, $\eta_{\rm tar}=1000$ for the 3-parameter eigenvalue problem
resulting from the ellipsoidal wave equation \eqref{eq:main_ODE_sys}
with the radii values $x_0 = 1, y_0 = 1.5$, $z_0 = 2$ and for the configuration $(\rho,\sigma,\tau)=(0,0,0)$.
The eigenfrequencies $\omega$ corresponding to these computed eigenvalues are also listed in the last column.
\label{tab:ellipsoid}}
\begin{center}
{\footnotesize
\begin{tabular}{rrrrr}
 \hline
Target $\eta_{\rm tar}$ & \multicolumn{1}{c}{$\lambda$} & \multicolumn{1}{c}{$\mu$} & \multicolumn{1}{c}{$\eta$} & \multicolumn{1}{c}{$\omega$} \\
\hline \rule{0pt}{2.3ex}%
 & \phantom{111}0.84989209 & \phantom{111}$-3.75231782$ & \phantom{111}2.40498182 & \phantom{1}2.34458979\cr
0 & \phantom{111}7.22643744 & \phantom{11}$-13.03122756$ & \phantom{111}5.59866649 & \phantom{1}3.57728277\cr
 & \phantom{111}2.05458475 & \phantom{11}$-13.46994828$ & \phantom{111}7.46473320 & \phantom{1}4.13064732\cr
\hline & \phantom{1}141.38925861& \phantom{1}$-404.17476271$ & \phantom{1}200.60583308 & 21.41325801\cr
200 & \phantom{11}63.08832970 & \phantom{1}$-423.06537129$ & \phantom{1}199.27518005 & 21.34212093 \cr
 & \phantom{1}317.06224687 & \phantom{1}$-551.46171960$ & \phantom{1}201.03180983 & 21.43598096 \cr
\hline & 1413.79140334 & $-2535.12357474$ & \phantom{1}999.75548115 & 47.80329890 \cr
1000 & \phantom{1}366.26819031 & $-2143.09786044$ & 1000.47359673 & 47.82046416 \cr
 & 1725.45584215 & $-2758.97471801$ & \phantom{1}999.44259731 & 47.79581804 \\
 \hline
\vspace{-1.5em}
\end{tabular}}
\end{center}
\end{table}
}
\end{example}
\smallskip

\begin{example}[Subspace Iteration on the Ellipsoidal Wave Equation]\label{ex:ell2}
{\rm
We aim to compute the same eigenvalues as in the previous example using
Algorithm~\ref{alg3}.
Initially we choose the search space of dimension $\ell=6$ and apply zero Arnoldi steps (in
cases a) and b)) or one Arnoldi step (in case c)) in the expansion.
We apply
SVD filtering to $F_j$ as in line~\ref{lin2:f}, where we set the cutoff parameter $\zeta=10^{-5}$.
The dimension of
${\rm span}(Q_1\otimes Q_2\otimes Q_3)$ in line~\ref{lin2:shrink} is limited to 1000, 5000 and 15000
in cases a), b) and c), respectively. Smaller subspace dimensions are sufficient when eigenvalues
with smaller $\eta$ components are targeted as these eigenvalues lie in the exterior of the
spectrum. On the other hand, larger subspaces are needed for larger values of $\eta_{\rm tar}$.
In every iteration we compute
100 (in
cases a) and b)) or 50 (in case c)) Ritz values of the projected 3-parameter eigenvalue problem in line~\ref{lin2:extract}
closest to the prescribed target. This is followed by one step (in
cases a) and b)) or three steps (in case c)) of the TRQI to refine each Ritz pair in line~\ref{lin2:refine}.
After that, we consider a Ritz pair
 as a candidate for an eigenpair if its residual is smaller then $\delta=10^{-2}$, and
if it satisfies the selection criterion \eqref{eq:selcrit} with $\xi_1=10^{-1}$.
In this case, we refine the Ritz pair with up to 3 additional steps of
the TRQI. The final residuals corresponding to the Ritz pairs are accepted
small enough with the particular choices of the parameters as in the Jacobi--Davidson method,
that is $\varepsilon=10^{-8}$ and $\xi_2=10^{-4}$ in line~\ref{lin2:convergence}.

We computed 80 eigenvalues for cases a), b), c) from Example \ref{ex:ell1}.
Computational times for cases a), b), c) are 8, 60, 421 seconds,
and 3, 3, 5 subspace iterations have been carried out, respectively.
Figure \ref{fig:ellsi1} shows the values of $|\eta-\eta_{\rm tar}|$ for the computed eigenvalues with
respect to their order of retrieval; one can observe a behavior similar to the Jacobi--Davidson method,
i.e., for smaller values of $\eta_{\rm tar}$ it is possible to observe a monotonicity in $|\eta-\eta_{\rm tar}|$
relative to the order of the retrieval of the eigenvalues, which gradually degrades as $\eta_{\rm tar}$
is increased.

\begin{figure}[htb]
\begin{center}
\includegraphics[scale=0.047]{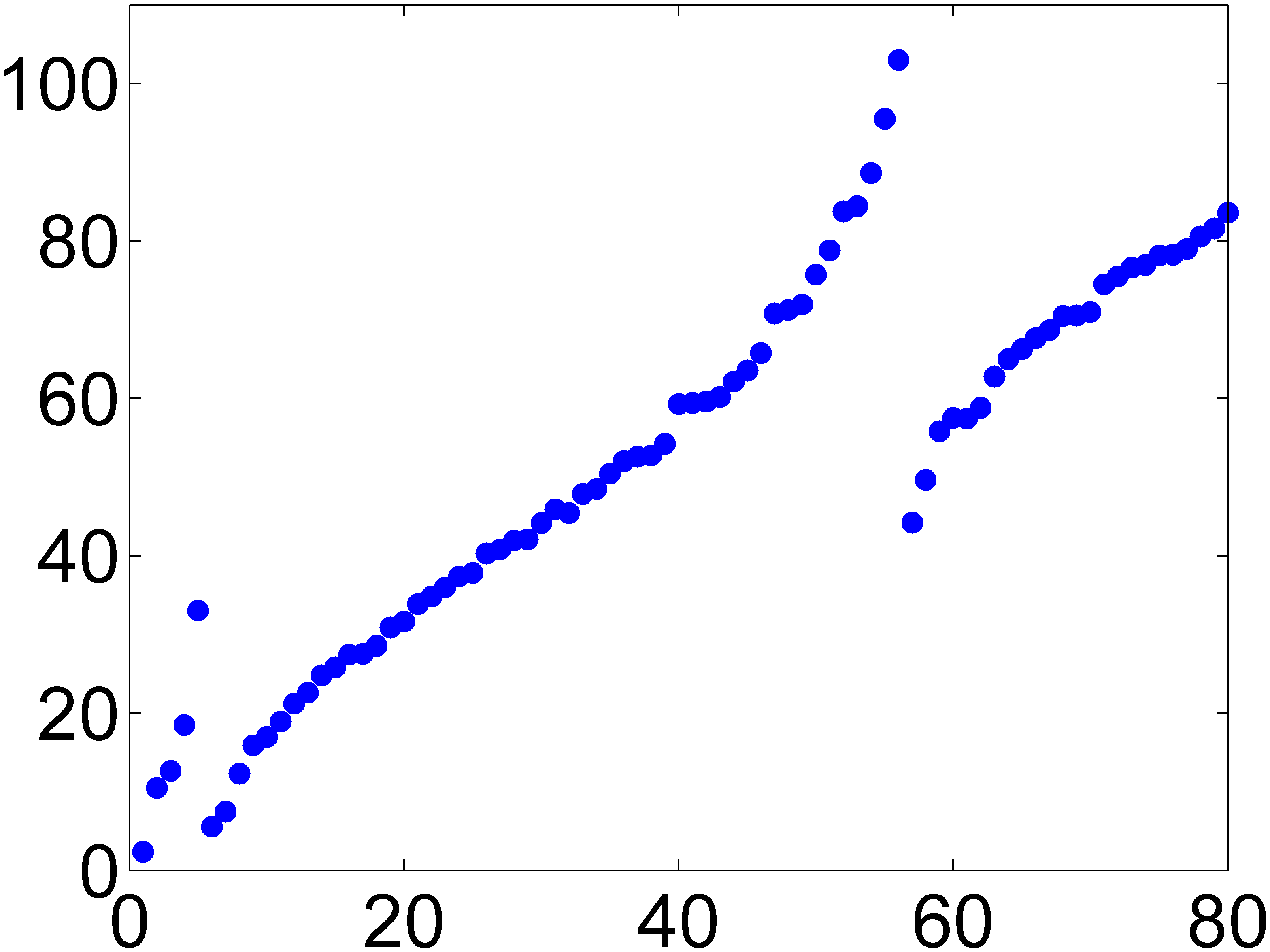} \
\includegraphics[scale=0.047]{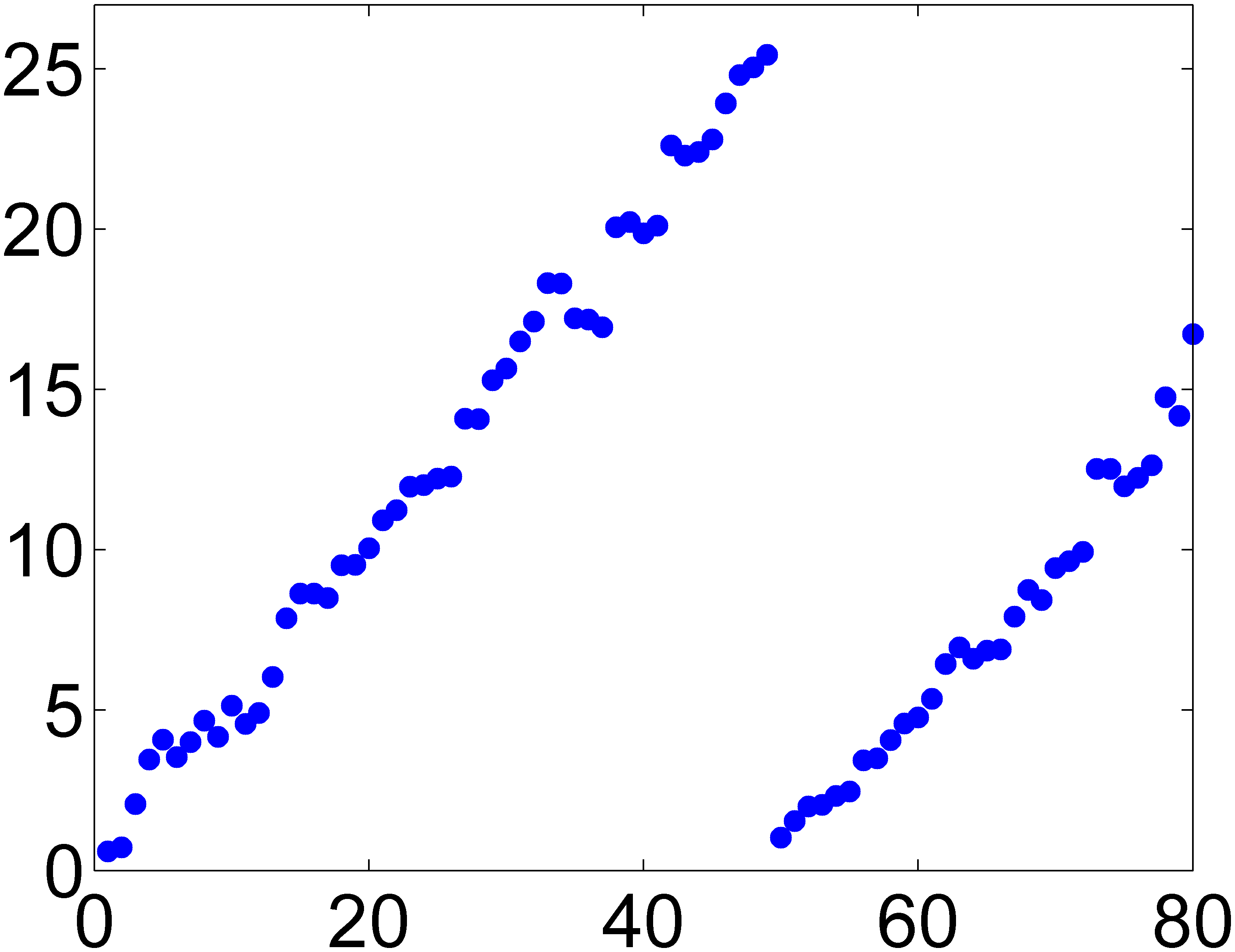} \
\includegraphics[scale=0.047]{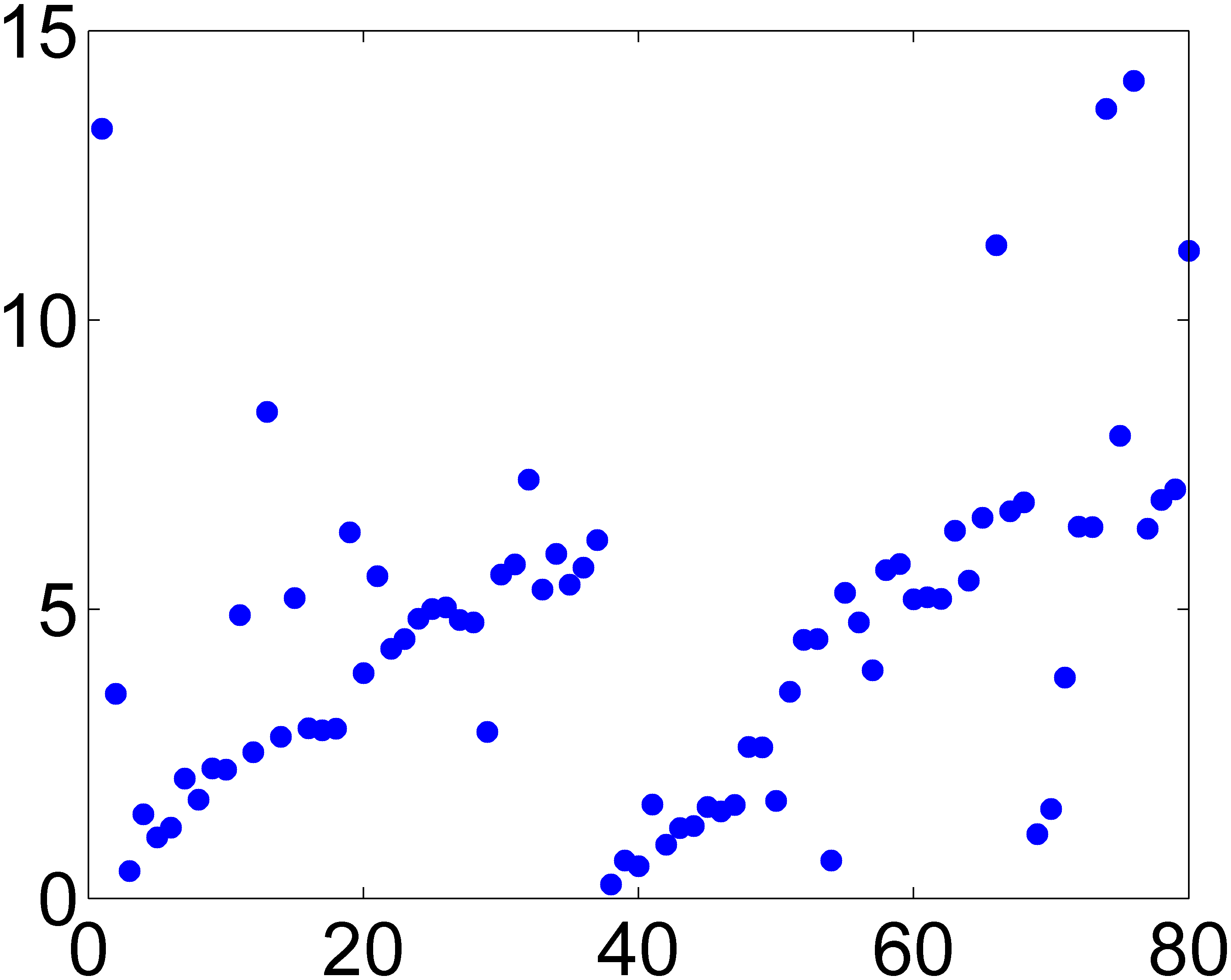}
\end{center}
\vspace{-0.7em}
\caption{Application of the subspace iteration with Arnoldi expansion to the 3-parameter
eigenvalue problem in Section~\ref{sec:result_ellip_wave}.
The values $|\eta-\eta_{\rm tar}|$ (vertical axis) of the first 80 computed eigenvalues $(\lambda,\mu,\eta)$
are plotted with respect to their order of retrieval (horizontal axis) for case
a) $\eta_{\rm tar}=0$ (left), b) $\eta_{\rm tar}=200$ (middle), and c) $\eta_{\rm tar}=1000$ (right).}
\label{fig:ellsi1}
\end{figure}

The SVD filtering does not reduce the dimension of the subspaces enough, so we also have to perform the shrinking
in line~\ref{lin2:shrink} of Algorithm~\ref{alg3}.
A stricter SVD filtering with a larger cutoff
is not a solution, as this results
in the removal of some of the good search spaces.
For case b) the dimensions of ${\rm span} ( Q_1\otimes Q_2\otimes Q_3 )$
are 154548, 48300, 11340 in the first, second, third iterations, respectively,
which are all shrunk into subspaces of dimension smaller than 5000.
The appearance of larger subspaces in the initial iterations is typical.
In the first iteration, the Arnoldi expansion increases the dimension of the search
space considerably, but, after a few subspace iterations, the search space contains good approximations
of the eigenvectors, and the Arnoldi expansion does not yield many independent directions.
These findings are in line with results obtained for the 2-parameter case \cite{mesp10}.

Following the practice in Example \ref{ex:ell1}, we  explored how many eigenvalues need to be
computed in total with the above settings in order to retrieve all of the 40, 20 and 10 eigenvalues with $\eta$ components
closest to $\eta_{\rm tar}$ for cases a), b) and c), respectively. The results are reported in Table \ref{tab:ell_avg_si}.

\begin{table}[htb]
\caption{This table concerns the application of subspace iteration with Arnoldi expansion
to the 3-parameter eigenvalue problem in Section~\ref{sec:result_ellip_wave}.
Total number of eigenvalues that had to be computed, subspace iterations that had to be performed and computational times
in order to retrieve the targeted number of eigenvalues $(\lambda,\mu,\eta)$ with $\eta$ components
closest to $\eta_{\rm tar}$ are listed.
\label{tab:ell_avg_si}}
\begin{center}
{\footnotesize
\begin{tabular}{rcccrcccrrr}
 \hline
 & & \multicolumn{3}{c}{$\#$ Computed eigenvalues} & \multicolumn{3}{c}{$\#$ Subspace iterations} & \multicolumn{3}{c}{Time (seconds)} \\
$\eta_{\rm tar}$ & $\#$ targeted & average & min & max & average & min & max & average & min & max \\
\hline \rule{0pt}{2.3ex}%
0 & 40 & 58.2 & 40 & 82 & 2.7 & 2 & 3 & 7.7 & 5.5 & 10.3 \cr
200 & 20 & 74.1 & 40 & 109 & 3.5 & 3 & 4 & 59.8 & 50.8 & 69.2 \cr
1000 & 10 & 68.9 & 50 & 88 & 5.3 & 4 & 7 & 461.2 & 307.4 & 651.7\cr
 \hline
\end{tabular}}
\end{center}
\end{table}
}
\end{example}

When we compare the numerical results obtained for the Jacobi--Davidson method and the subspace iteration method,
we see that the subspace iteration works slightly faster for mildly interior eigenvalues.
This comes at the expense of much larger memory requirements; 
for instance,
at least 16 GB of RAM is needed by subspace iteration to use a search space of dimension 15000.
If subspaces are restricted to small dimensions, then we do not get approximations that are good enough
to lead to eigenpairs (even if additional subspace iterations are allowed).

\subsection{Baer wave equations}\label{sec:result_Baer_wave}
By solving the 3-parameter eigenvalue problem resulting from the Baer wave equations discussed in Section~\ref{sec:motive_Baer_wave},
we can obtain many estimates for low eigenfrequencies of the Helmholtz equation (\ref{eq:helm}) on the specified intersection of paraboloids.
We could not find any similar numerical results regarding this example in the literature, so, up to our knowledge, this is the first time that
Helmholtz equation is solved numerically in paraboloidal coordinates. The results could be used for future comparisons to other
numerical methods.

As in the previous subsection we discretize the system of Baer wave equations \eqref{eq:baerF}
for the configuration $(\rho,\sigma)=(0,0)$ with Chebyshev collocation on 300 points.
We are interested in
\begin{enumerate}
\item[a)] the lowest eigenfrequencies (i.e., $\eta_{\rm tar}=0$), and
\item[b)] the eigenfrequencies closest to 10 (i.e., $\eta_{\rm tar}=100$).
\end{enumerate}
In case b), we apply the substitution
$
	(\widetilde{\lambda}, \widetilde{\mu}, \widetilde{\eta})
			=
	(\lambda,\mu,\eta - \eta_{\rm tar})$
and search for eigenvalues close to $\widetilde \eta=0$ of the transformed problem,
with the coefficient matrices
 $\widetilde A_j=A_j-\eta_{\rm tar} D_j$, $\widetilde B_j=B_j$, $\widetilde C_j=C_j$, and $\widetilde D_j=D_j$
for $j = 1,2,3$. Once again, before applying the numerical methods,
we multiply the $j$th equation by $\widetilde A_j^{-1}$ for $j = 1,2,3$.
\smallskip

\begin{example}[Results for Baer wave equations]\label{ex:baer}
{\rm We apply both
algorithms to the problem above. Using the same settings as in Example \ref{ex:ell1}, the Jacobi--Davidson method
 computes 80 eigenvalues in 10 seconds after 16 subspace updates in case a), and
 in 314 seconds using 341 subspace updates in case b). For subspace iteration,
 we use the same settings as in cases a) and b) of Example \ref{ex:ell2}.
This means that we limit the dimension of the search space to 1000
in case a), and 5000 in case b). The method requires 7 seconds and 2 subspace iterations to compute 80 eigenvalues
in case a), and 59 seconds and 3 subspace iterations in case b).

We omit the plots of $|\eta - \eta_{\rm tar}|$ with respect to the retrieval order
for the converged eigenvalues, as they turn out to be similar to the left-hand and the middle plots
in Figures \ref{fig:elljd1} and \ref{fig:ellsi1}. As in Examples \ref{ex:ell1} and \ref{ex:ell2},
we end up computing more eigenvalues for larger values of $\eta_{\rm tar}$ in order to
retrieve all of the desired eigenvalues closest to $\eta_{\rm tar}$.

Similar to the previous examples, we tested how many eigenvalues need to be computed
with the settings above in order to retrieve all of the 40 and 20 eigenvalues with $\eta$ components
closest to $\eta_{\rm tar}$ for cases a) and b), respectively. For both methods, Table \ref{tab:baer_avg_si}
reports the average results together with the best and worst run over a set of 10 different random
initial subspaces.

\begin{table}[htb]
\caption{The Jacobi--Davidson method (JD) and the subspace
iteration with Arnoldi expansion (SI) applied to the 3-parameter eigenvalue problem
in Subsection~\ref{sec:result_Baer_wave}. This table lists the total number of eigenvalues that had
to be computed, number of subspace iterations that had to be performed and computational times
required to retrieve all of the targeted eigenvalues $(\lambda,\mu,\eta)$ with their $\eta$
components closest to $\eta_{\rm tar}$.
\label{tab:baer_avg_si}}
\begin{center}
{\footnotesize
\begin{tabular}{crcrcrrrrrrr}
 \hline
 & & & \multicolumn{3}{c}{$\#$ Computed eigenvalues} & \multicolumn{3}{c}{$\#$ Subspace iterations}
 & \multicolumn{3}{c}{Time (seconds)} \\
method & $\eta_{\rm tar}$ & $\#$ targeted & average & min & max & average & min & max & average & min & max \\
\hline \rule{0pt}{2.3ex}%
JD & 0 & 40 & 40 & 40 & 40 & 8.8 & 7 & 10 & 3.4 & 1.8 & 7.6 \cr
JD & 100 & 20 & 70.6 & 41 & 142 & 268.5 & 137 & 453 & 209.7 & 104.0 & 356.0 \cr
\hline \rule{0pt}{2.3ex}%
SI & 0 & 40 & 44.0 & 40 & 80 & 2.1 & 2 & 3 & 5.9 & 5.4 & 9.4 \cr
SI & 100 & 20 & 88.6 & 63 & 104 & 4.3 & 4 & 5 & 74.2 & 68.0 & 86.2\cr
 \hline
\end{tabular}}
\end{center}
\end{table}

Algorithms \ref{alg:jd} and \ref{alg3} return the same 10 eigenfrequencies
closest to 10. In particular, the results by both algorithms
agree on the first three eigenfrequencies larger than 10; these eigenfrequencies
are listed in Table \ref{tab:paraboloid} along with the lowest six eigenfrequencies from case a).
We verify the correctness of the computed results by means of the Klein oscillation
property, which concerns the number of zeros of $X_i(\xi_i)$ as in (\ref{eq:baer}).
This property is formally stated in the next theorem. To our knowledge, it has not been
explicitly shown for the system of Baer wave equations up to this point, so a proof is
included in Appendix \ref{proof_Klein_prop}.
 \begin{theorem}\label{thm:baer}
For each of the four possible
configurations $(\sigma,\tau)$ in \eqref{eq:baerXF},
the system of Baer wave differential equations \eqref{eq:baer} has the Klein
oscillation property, i.e., all of its eigenvalues are real and for each triple of
nonnegative integers $(j_1,j_2,j_3)$ there exists exactly one eigenvalue $(\lambda,\mu,\eta)$ such
that the corresponding eigenfunctions $X_1(\xi_1)$, $X_2(\xi_2)$, $X_3(\xi_3)$ have exactly $j_1$
zeros on $(\gamma,c)$, $j_2$ zeros on $(c,b)$, and $j_3$ zeros on $(b,\beta)$, respectively.
\end{theorem}

In Table \ref{tab:paraboloid}, we provide an integer triple $(j_1,j_2,j_3)$ for each eigenfrequency $\omega$
with $j_i$ denoting the index of $X_i(\xi_i)$ as in (\ref{eq:baer}), that is the number of the zeros of the
corresponding solution $X_i(\xi_i)$ on the interval $(\ell_i, \ell_{i+1})$ with $\ell_1 = \gamma = 0$,
$\ell_2 = c = 1$, $\ell_3 = b = 3$, $\ell_4 = \beta = 5$.
The reported results in the table are in harmony with Theorem~\ref{thm:baer},
that is there exists exactly one eigenvalue corresponding to each nonnegative triple $(j_1,j_2,j_3)$.
Furthermore, the results confirm that the lowest eigenfrequencies have the smallest indices, as expected
in theory \cite[Section 8]{Atkinson2}.

\begin{table}[htb]
\caption{
Results for the Helmholtz equation
with a Dirichlet boundary condition on a domain bounded by two elliptic paraboloids
$\gamma=0$ and $\beta=5$ in paraboloidal coordinates with $c = 1$ and $b = 3$
for the configuration $(\sigma,\rho)=(0,0)$.
Estimates for the lowest 6 eigenfrequencies and the first 3 eigenfrequencies larger
than 10 of a related 3-parameter eigenvalue problem, namely the Baer wave equations
(\ref{eq:baer}), are listed in the table. In each row, in addition to the eigenfrequency $\omega$,
the corresponding eigenvalue $(\lambda,\mu,\eta)$ and the indices $(j_1,j_2,j_3)$ of the
corresponding functions $X_1,X_2,X_3$ are also listed.
\label{tab:paraboloid}}
{\footnotesize
\begin{center}
\begin{tabular}{rrrrccc}
 \hline
\multicolumn{1}{c}{$\lambda$} & \multicolumn{1}{c}{$\mu$} & \multicolumn{1}{c}{$\eta$} & \multicolumn{1}{c}{$\omega$} & $j_1$ & $j_2$ & $j_3$ \\
\hline \rule{0pt}{2.3ex}%
 \phantom{1}4.68572309 & $\phantom{1}-4.68336498$ & 1.06171767 & 1.03039685 & 0 & 0 & 0\\
 \phantom{1}8.98735825 & $-10.98752097$ & 2.52640136 & 1.58946575 & 0 & 1 & 0\\
 \phantom{1}7.84880354 & $\phantom{1}-9.81384367$ & 2.70641882 & 1.64511970 & 0 & 0 & 1\\
 23.88802753 & $-18.11389297$ & 3.33102584 & 1.82510982 & 1 & 0 & 0\\
 15.35149716 & $-20.44266626$ & 4.60326049 & 2.14552103 & 0 & 2 & 0\\
 13.98083910 & $-19.03124115$ & 4.90993954 & 2.21583834 & 0 & 1 & 1\\
\multicolumn{1}{c}{$\vdots$} & \multicolumn{1}{c}{$\vdots$} &
\multicolumn{1}{c}{$\vdots$} & \multicolumn{1}{c}{$\vdots$} &
\multicolumn{1}{r}{$\vdots\,$} & \multicolumn{1}{r}{$\vdots\,$} & \multicolumn{1}{r}{$\vdots\,$} \\
368.61672638 & $-467.93904610$ & 100.12807872 & 10.00640189 & 3 & 10 & 2 \\
909.43143081 & $-643.56267025$ & 100.20818157 & 10.01040367 & 9 & 4 & 0 \\
315.21740925 & $-436.37381658$ & 100.32096431 & 10.01603536 & 2 & 10 & 3 \\
\hline
\end{tabular}
\end{center}}
\end{table}

\medskip

Note that we can approximate the solutions $X_1(\xi_1)$, $X_2(\xi_2)$, $X_3(\xi_3)$ of the
Baer wave equation by employing \eqref{eq:baerF} subject to the boundary conditions \eqref{baer:bc},
as well as eigenvectors of the discretized algebraic 3-parameter eigenvalue problem. We can
combine them in a smooth eigenfunction $X(\xi)$ bounded at the points $\xi=1$, $\xi=3$ and satisfying
\[
 (\xi-1)(\xi-3)\,X'' + \tfrac{1}{2}(2\xi-4)\,X' + (\lambda +\mu \xi +\eta \xi^2)\,X=0
\]
over $\xi \in [0,5]$ subject to $X(0)=0$, $X(5)=0$. The eigenfunctions corresponding to
the six lowest eigenfrequencies computed are displayed in Figure \ref{fig:baer6}.

\begin{figure}[htb]
\begin{center}
\includegraphics[scale=0.038]{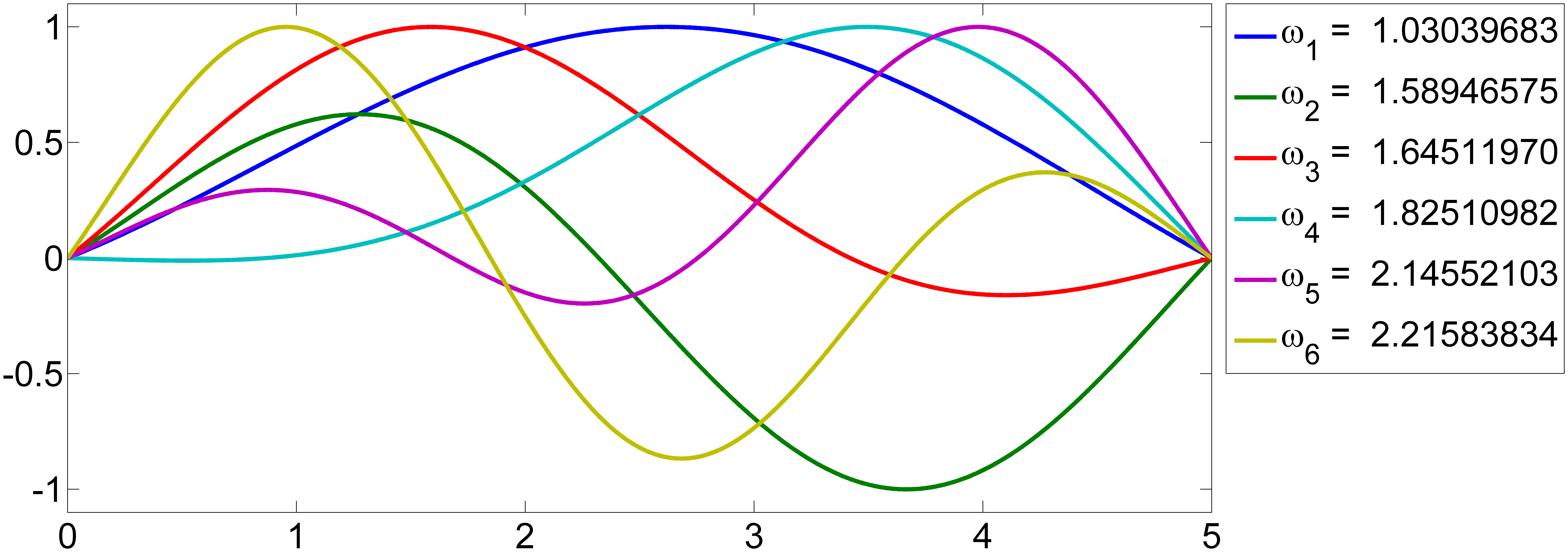}
\end{center}
\vspace{-0.5em}
\caption{The eigenfunctions corresponding to the six lowest eigenfrequencies
in Table \ref{tab:paraboloid}.}
\label{fig:baer6}
\vspace{-1.5em}
\end{figure}
}
\end{example}

\subsection{Randomly generated example}\label{sec:result_random_example}
Our final example is a 3-parameter eigenvalue problem generated in Matlab in such a way that we know
all of the eigenvalues. We first form the matrices
\[
 A_i=U_i\,{\rm diag}(a_i)\,V_i,\quad
 B_i=U_i\,{\rm diag}(b_i)\,V_i,\quad
 C_i=U_i\,{\rm diag}(c_i)\,V_i,\quad
 D_i=U_i\,{\rm diag}(d_i)\,V_i,
\]
where $a_i,b_i,c_i,d_i$ are real random vectors of size $n$, and $U_i$, $V_i$ are random well-conditioned sparse
matrices of size $n\times n$ for $i=1,2,3$. Observe that the eigenvalues of the resulting 3-parameter eigenvalue
problem are the solutions to the $3\times 3$ linear systems
\begin{align*}
(a_1)_{\ell}&=\lambda\,(b_1)_{\ell} + \mu\,(c_1)_{\ell} + \eta\,(d_1)_{\ell},\\
(a_2)_{j}&=\lambda\,(b_2)_{j} + \mu\,(c_2){j} + \eta\,(d_2)_{j},\\
(a_3)_{k}&=\lambda\,(b_3)_{k} + \mu\,(c_3)_{k} + \eta\,(d_3)_{k}
\end{align*}
for
$\; \ell,j,k=1,\ldots,n$, where $(a_i)_p, (b_i)_p, (c_i)_p,(d_i)_p$ denote the $p$th entries
of $a_i, b_i, c_i,d_i$ for $i = 1,2,3$. This observation enables us to compute all $n^3$ eigenvalues for moderate
values of $n$, e.g., $n=100$.
\smallskip

\begin{example}[Randomly generated example]\label{ex:random}
{\rm
We set $n=100$ and generate entries of $a_i,b_i,c_i,d_i$ randomly, by first selecting them independently
from a uniform distribution over $[0,1]$ and then applying shifts. The Matlab code generating $a_i, b_i, c_i,d_i$ and the
 matrices $U_i, V_i$ for $i = 1,2,3$ is given below. \smallskip

{\small\begin{verbatim}
U1 = 0.3*sprand(n,n,0.04)+speye(n); U2 = 0.3*sprand(n,n,0.04)+speye(n);
U3 = 0.3*sprand(n,n,0.04)+speye(n); V1 = 0.3*sprand(n,n,0.04)+speye(n);
V2 = 0.3*sprand(n,n,0.04)+speye(n); V3 = 0.3*sprand(n,n,0.04)+speye(n);
a1 = rand(n,1)-0.5; b1 = rand(n,1)+2; c1 = rand(n,1); d1 = rand(n,1)-1;
a2 = rand(n,1)-0.5; b2 = rand(n,1); c2 = rand(n,1)+2; d2 = rand(n,1)+0.5;
a3 = rand(n,1)-0.5; b3 = rand(n,1)-1; c3 = rand(n,1); d3 = rand(n,1)+2;
\end{verbatim}}\smallskip
\noindent
Figure \ref{fig:random13} illustrates the resulting eigenvalues projected orthogonally onto the plane $\mu = 0$.
Orthogonal projections of the eigenvalues onto the planes $\lambda=0$ and $\eta=0$ yield similar pictures.

\begin{figure}[htb]
\begin{center}
\includegraphics[scale=0.06]{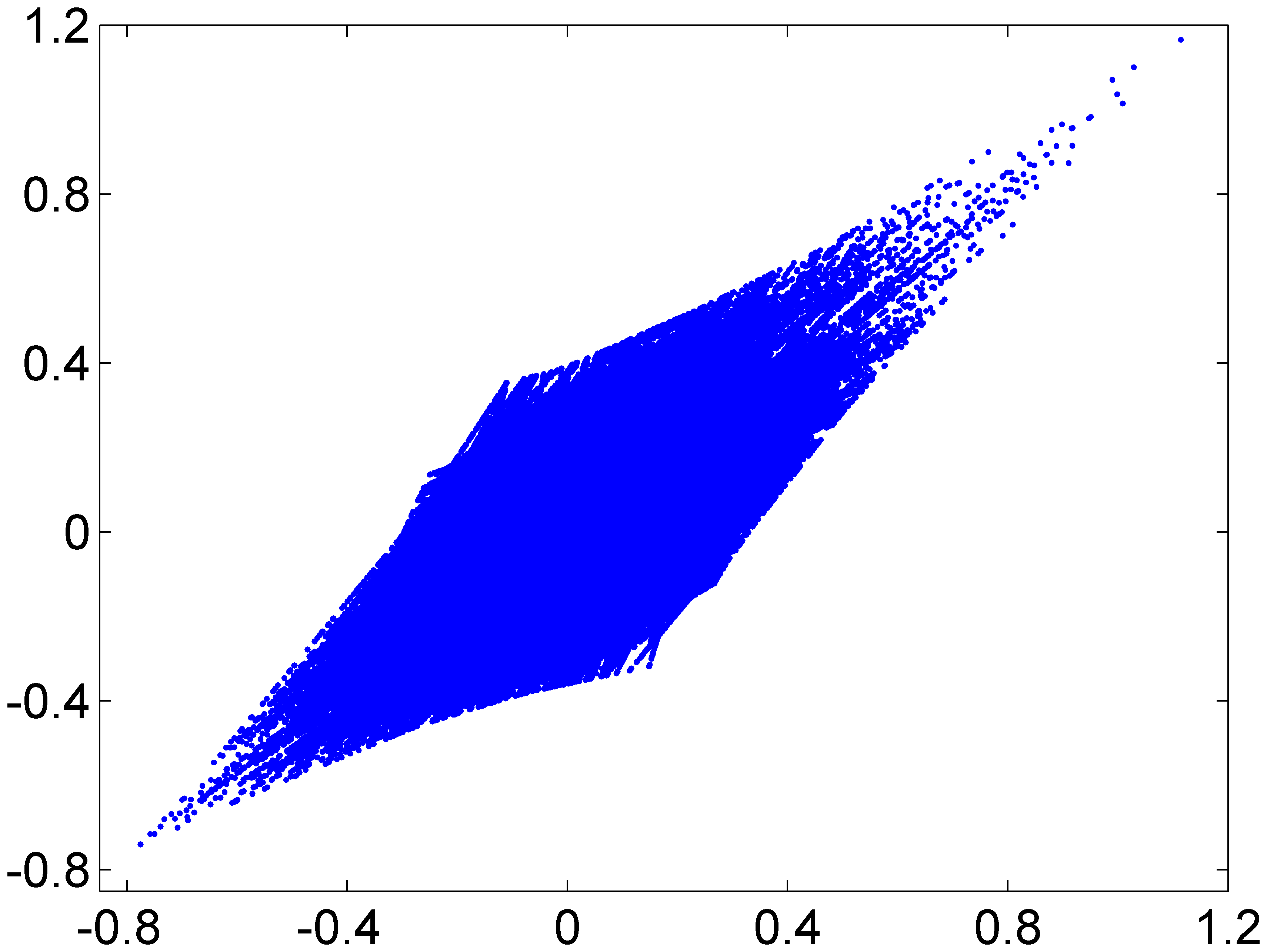}
\end{center}
\vspace{-0.5em}
\caption{The orthogonal projections of the eigenvalues of the 3-parameter eigenvalue problem considered in Example \ref{ex:random}
onto the plane $\mu=0$. 
The horizontal and vertical axes correspond to the $\lambda$ and $\eta$ components, respectively.\\[1.0em]}
\label{fig:random13}
\vspace{-1,5em}
\end{figure}

We test the methods to compute
\begin{enumerate}
\item[a)] 20 external eigenvalues with $\eta$ closest to $\eta_{\rm tar}=-0.8$, and
\item[b)] 10 mildly interior eigenvalues with $\eta$ closest to $\eta_{\rm tar}=-0.5$.
\end{enumerate}
Following the practice in the other examples,
we use the substitution
$
	(\widetilde{\lambda},\widetilde{\mu},\widetilde{\eta})
			=
	(\lambda,\mu,\eta-\eta_{\rm tar})$
and search for the eigenvalues
of the transformed problem having $|\widetilde{\eta}|$
 as small as possible.

We apply the Jacobi--Davidson method, where we solve the correction equation exactly,
and use up to 3 TRQI steps as well as the choices $\delta=10^{-6}$ and $\varepsilon=10^{-10}$ to decide whether
the residual of a Ritz pair is small enough to consider it as an eigenpair. All of the remaining parameters are as
in the previous examples. The results are presented in Table \ref{tab:ell_avg_rnd}.

\begin{table}[htb]
\caption{The Jacobi--Davidson method applied to a random 3-parameter eigenvalue problem in Example \ref{ex:random}.
The table reports the number of eigenvalues that had to be computed, subspace iterations that had to be
performed and computational times required in order to retrieve all of the targeted eigenvalues
$(\lambda,\mu,\eta)$ with $\eta$ components closest to $\eta_{\rm tar}$.
\label{tab:ell_avg_rnd}}
\begin{center}
{\footnotesize
\begin{tabular}{ccccccccccc}
 \hline
 & & \multicolumn{3}{c}{$\#$ Computed eigenvalues} & \multicolumn{3}{c}{$\#$ Subspace updates} & \multicolumn{3}{c}{Time (seconds)} \\
$\eta_{\rm tar}$ & $\#$ targeted & average & min & max & average & min & max & average & min & max \\
\hline \rule{0pt}{2.3ex}%
$-0.8$ & 20 &  78.5 & 38 & 137 & 206.3 &  112 &  363 & 197 &  102 &  346 \cr
$-0.5$ & 10 & 97.3 & 45 & 195 & 284.4 & 148 & 478 & 266 & 135 & 454 \cr
 \hline
\end{tabular}}
\end{center}
\end{table}

\medskip

The subspace iteration does not work well on this example. We 
could not find a combination of parameters to make it competitive with the Jacobi--Davidson method.
The method computes some eigenpairs, but requires a lot of time and returns many eigenvalues
 far away from the target.

It was not possible to compute the eigenvalues $(\lambda,\mu,\eta)$ with the minimal values of $|\eta|$ by
Algorithm~\ref{alg:jd} and Algorithm~\ref{alg3}. The difficulty is that these eigenvalues are highly interior.
If, instead, we aim for the eigenvalues closest to $(0,0,0)$, then the Jacobi--Davidson method performs well
with the parameter values indicated above but by solving the correction equations approximately,
in particular by employing 10 steps of GMRES with $A_i^{-1}$ as the preconditioner for the $i$th equation for $i = 1,2,3$.
The method converges to 50 eigenvalues after 119 subspace updates in 82 seconds. All but three of the 50 eigenvalues
closest to $(0,0,0)$ are among the converged eigenvalues and the remaining eigenvalues converged after a few more
iterations. This shows that the Jacobi--Davidson method is capable of locating the eigenvalues closest to a prescribed
point, even if these eigenvalues are interior ones.
}
\end{example}

\section{Concluding Remarks}

We have introduced a Jacobi--Davidson method (Algorithm~\ref{alg:jd}) and a subspace iteration method (Algorithm~\ref{alg3})
that restarts the subspace at every iteration for the 3-parameter eigenvalue problem.
Matlab implementations are available in package \texttt{MultiParEig} \cite{BorMC1}.
The Jacobi--Davidson method is especially well-suited to locate eigenvalues close to a prescribed target.
This method seems to perform well in practice also to locate eigenvalues $(\lambda,\mu,\eta)$ whose $\eta$ components
are close to a prescribed target, while the proposed subspace iteration method is specifically designed for this task.
Numerical experiments indicate that when the eigenvalues are targeted based on their $\eta$ components,
both methods are very good at locating exterior eigenvalues and mildly interior eigenvalues, but both methods
struggle to compute interior eigenvalues.

Based on the numerical experiments, it is not possible to draw a clear conclusion
regarding the efficiency of the methods in comparison to each other. In some of the numerical results reported, the Jacobi--Davidson
method exhibits better performance in terms of efficiency, in others the subspace iteration method appears better. To this end,
the choice of the parameters, such as the thresholds for the residuals of the Ritz pairs and maximal subspace dimensions,
plays an important role.

%

\section{Acknowledgement}

The authors are grateful to two anonymous referees and the associate editor in charge of the manuscript  
for their time and valuable suggestions on an initial version of this manuscript. There are no conflicts of interest to this work.

\medskip

\appendix

\section{Proof of Theorem \ref{thm:baer}}\label{proof_Klein_prop}
We will only consider the configuration $(\rho,\sigma)=(0,0)$, as the other
three configurations can be treated similarly. Inspired by \cite{Cohl}, we introduce
\[
g(z):= \left| (\xi-b)(\xi-c) \right|^{1/2}.
\]
We can now write \eqref{eq:baer} as a 3-parameter Sturm--Liouville eigenvalue problem
\begin{align*}
	\big(g(\xi_1)X_1'\big)'&+\frac{1}{g(\xi_1)}
	(\lambda +\mu \xi_1 +\eta \xi_1^2)\,X_1=0, \quad \gamma<\xi_1<c,\\
	\big(g(\xi_2)X_2'\big)'&+\frac{1}{g(\xi_2)}
	(\lambda +\mu \xi_2 +\eta \xi_2^2)\,X_2=0, \quad c<\xi_2<b,\\
	\big(g(\xi_3)X_3'\big)'&+\frac{1}{g(\xi_3)}
	(\lambda +\mu \xi_3 +\eta \xi_3^2)\,X_3=0, \quad b<\xi_3<\beta.
\end{align*}
Next, we introduce the elliptic integral
\[
G(s):=\int_\gamma^s\frac{d\sigma}{g(\sigma)},
\]
which is an increasing absolutely continuous function,
and apply the substitution $t_i=G(\xi_i)$, $u_i(t_i)=X_i(\xi_i)$
for $i=1,2,3$. This gives rise to
\begin{align}
u_1''&+(\lambda +\mu \phi(t_1) +\eta \phi(t_1)^2)\,u_1=0, \quad T_0<t_1<T_1,\nonumber\\
u_2''&-(\lambda +\mu \phi(t_2) +\eta \phi(t_2)^2)\,u_2=0, \quad T_1<t_2<T_2,\label{eq:baersub}\\
u_3''&+(\lambda +\mu \phi(t_3) +\eta \phi(t_3)^2)\,u_3=0, \quad T_2<t_3<T_3,\nonumber
\end{align}
where $T_0=G(\gamma)=0$, $T_1=G(c)$, $T_2=G(b)$, $T_3=G(\beta)$,
and $\phi:[T_0,T_3]\to[\gamma,\beta]$ is the inverse function of $G$.
It can be shown that \eqref{eq:baersub} is a right definite problem due to \cite[Thm.~3.6.2]{HV}.
Specifically, let us consider the corresponding determinant function (see, e.g., \cite{Atkinson2}) given by
\begin{align*}
\delta_0(t_1,t_2,t_3) & \; = \;
\left|
\begin{array}{rrr}
1 & \phi(t_1) & \phi(t_1)^2\cr
-1 & -\phi(t_2) & -\phi(t_2)^2\cr
1 & \phi(t_3) & \phi(t_3)^2\cr
\end{array}\right|\\[0.3em]
& \; = \; (-1)(\phi(t_2)-\phi(t_1))(\phi(t_3)-\phi(t_1))(\phi(t_3)-\phi(t_2)),
\end{align*}
where $T_0\le t_1\le T_1\le t_2\le T_2\le t_3\le T_3$.
One can verify that $\delta_0(t_1,t_2,t_3)<0$ for all $t_1<t_2<t_3$. Since
$\delta_0$ is of constant sign on a dense subset of $[T_0,T_1]\times[T_1,T_2]\times[T_2,T_3]$, it follows from
\cite[Thm.~3.6.2]{HV}) that the problem is right definite. Hence, \cite[Thms.~3.5.1 and 3.5.2]{HV}
imply that the Klein oscillation theory holds for the problem. In particular, all eigenvalues are real and
for each triple of nonnegative integers $(j_1,j_2,j_3)$, there exists exactly one eigenvalue $(\lambda,\mu,\eta)$
such that the corresponding eigenfunction $u_i(t_i)$ has exactly $j_i$ zeros on $(T_{i-1},T_i)$ for $i=1,2,3$.


\end{document}